\title{Stallings' Group is Simply Connected at Infinity}
\author{Michael Mihalik}
\newtheorem{theorem}{Theorem}[section]
\newtheorem{proposition}[theorem]{Proposition}
\newtheorem{lemma}[theorem]{Lemma}
\newtheorem{corollary}[theorem]{Corollary}
\newtheorem{conjecture}[theorem]{Conjecture}
\newenvironment{proof}{\addvspace{12pt}\noindent{\bf Proof:}}{
$\Box$\par\addvspace{12pt}}
\numberwithin{equation}{section}
\newcounter{definitionnum}
\newenvironment{definition}{\addvspace{12pt}\refstepcounter{definitionnum}
\noindent{\bf Definition \arabic{definitionnum}.}}{\par\addvspace{12pt}}
\date{\today}
\begin{document}

\maketitle

\begin{abstract} 
Let $F_2$ be the free group on two generators and let $B_n$ ($n \geq 2$) denote the kernel of the homomorphism 
$$F_2 \times  \cdots (n) \cdots \times F_2 \rightarrow {\mathbb Z}$$
sending all generators to the generator $1$ of $\mathbb Z$. The groups $B_k$ are called the {\it Bieri-Stallings} groups and $B_k$ is type $\mathcal F_{k-1}$ but not $\mathcal F_k$. For $n\geq 3$ there are short exact sequences of the form 
 $$1 \rightarrow B_{n-1} \rightarrow B_n \rightarrow F_2 \rightarrow 1.$$ 
 This exact sequence can be used to show that $B_n$ is $(n-3)$-connected at infinity for $n\geq 3$. Stallings' proved that $B_2$ is finitely generated but not finitely presented. We conjecture that for $n\geq 2$, $B_n$ is $(n-2)$-connected at infinity. For $n=2$, this means that $B_2$ is 1-ended and for $n=3$ that $B_3$ (typically called Stallings' group) is simply connected at infinity. We verify the conjecture for $n=2$ and $n=3$. Our main result is the case $n=3$:
Stalling's group is simply connected at $\infty$. 
\end{abstract}

\maketitle

\section{Introduction}

A group $G$ is of type $\mathcal F_1$ when it can be finitely generated, $\mathcal F_2$ when it can be finitely
presented, and more generally $\mathcal F_n$ when there is a $K(G,1)$-complex with finite $n$-skeleton. Let $F_2$ be the free group on two generators.

$$F_2\times F_2\equiv \langle a_1,a_2, b_1,b_2 \ |\  [a_i,b_j]=1\hbox{ for all } i,j\in \{1,2\}\rangle.$$ 
Stallings \cite{Stall63} showed that the kernel $K$ of the homomorphism $$f:F_2\times F_2\to \mathbb Z\hbox{ where } f(a_1)=f(a_2)=f(b_1)=f(b_2)=1,$$
is finitely generated but not finitely presented. Stallings' also proved that a certain HNN-extension of $K$ (now known as Stallings' group) is $\mathcal F_2$ but not $\mathcal F_3$. Notationally, Stallings' group is usually denoted as $S$ or $B_3$.  The group $K$ (above) is $B_2$. 

Let $B_n$ ($n \geq 2$) denote the kernel of the homomorphism 
$$F_2 \times  \cdots (n) \cdots \times F_2 \rightarrow {\mathbb Z}$$
 sending all generators to the generator $1$. (Here $K$ is isomorphic to $B_2$ and Stallings' group is isomorphic to $B_3$.)  Bieri \cite{Bieri81} proved that $B_n$ is $\mathcal F_{n-1}$ but not $\mathcal F_n$. These groups are often call the {\it Bieri-Stallings} groups. Gersten \cite{Ger95} proved that for $n\geq 3$, $B_n$ admits quintic isoperimetric function. For Stallings' group this was improved to cubic by Baumslag, Bridson, Miller and Short \cite{BBMS97} and then to quadratic by Dison, Elder, Riley and Young \cite{DERY09}. This last paper also shows that the asymptotic cones of Stallings' group are all simply connected, but are not all 2-connected. Finally, Stallings' group admits a linear isodiametric function.

 There are short exact sequences of the form
 $$1 \rightarrow B_{n-1} \rightarrow B_n \rightarrow F_2 \rightarrow 1.$$ 
 \medskip
 
\noindent [{\bf Theorem 17.3.6},\cite{G}]. {\it Let $1\to N\to G\to Q\to 1$  be a short exact sequence of infinite
groups of type $\mathcal F_n$, and let $R$ be a PID. Working with respect to $R$, let $N$ be
$s$-acyclic at infinity and let $Q$ be $t$-acyclic at infinity where $s\leq n-1$ and
$t \leq n-1$. Then $G$ is $u$-acyclic at infinity where $u=min\{s + t + 2, n-1\}$. If
$s\geq 0$ or $t\geq 0$  (i.e., if $N$ or $Q$ has one end) then $G$ is $u$-connected at infinity.}
 
 \medskip
 
 This theorem implies that $B_n$ is $(n-3)$-connected at infinity, for $n \geq 4$
 (since $B_{n-1}$ is type $\mathcal F_{n-2}$ and not $\mathcal F_{n-1}$, we cannot conclude that $B_n$ is $(n-2)$-connected at infinity.) The proof of our main theorem is geometric.
 
 \begin{theorem} \label{Main} {\bf (Main)} 
Stallings' group $B_3$ is simply connected (1-connected) at infinity.
\end{theorem}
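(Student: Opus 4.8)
\medskip
\noindent\textbf{Outline of a proof.} I would work with the Bestvina--Brady cube-complex model of $B_3$ and establish simple connectivity at infinity by first absorbing the subgroup distortion to pass into the ambient product of three trees, and then killing the fundamental pro-group with a nerve argument whose nerve turns out to be a join of three discrete sets. For the model: view $F_2\times F_2\times F_2$ as the right-angled Artin group on the octahedron graph $K_{2,2,2}$, let $Y=T_1\times T_2\times T_3$ be the product of the three $4$-regular trees (the universal cover of the Salvetti complex, a CAT(0) cube complex), and let $f=f_1+f_2+f_3:Y\to\R$ be the sum of the three exponent-sum functions, so that $B_3$ acts freely and cocompactly on every slab of $f$. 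The descending and ascending links of $f$ at each vertex of $Y$ are copies of the flag complex of $K_{2,2,2}$, namely $S^2$, which is simply connected; hence, for a suitable constant $N$, the slab $X:=f^{-1}([-N,N])$ is simply connected --- this is exactly the Bestvina--Brady input behind $B_3$ being of type $\mathcal F_2$. Since $B_3$ acts freely cocompactly on the simply connected $X$, it suffices to show $X$ is simply connected at infinity. I also retain the $B_3$-equivariant coordinate projections $p_i:X\to T_i$.

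\emph{Reduction to the product of trees.} Fix a base vertex $w_0\in X$ and a compact set $C\subseteq B_X(w_0,R)$ (intrinsic ball). The inclusion $X\hookrightarrow Y$ is $1$-Lipschitz from the intrinsic metric on $X$ to the $\ell^\infty$ metric on $Y$, and since balls in $F_2\times F_2\times F_2$ are finite, the distortion function $\rho(m):=\max\{d_X(x,w_0):x\in X,\ d_Y(x,w_0)\le m\}$ is finite-valued. Put $D:=B_X(w_0,\rho(R)+1)$. Then $X\setminus D\subseteq Z_R:=X\setminus B_Y(w_0,R)\subseteq X\setminus B_X(w_0,R)\subseteq X\setminus C$, so it is enough to prove that $Z_R$ is connected and simply connected: loops in $X\setminus D$ then bound disks in $Z_R\subseteq X\setminus C$, and any two points of $X\setminus D$ are joined by a path in $Z_R\subseteq X\setminus C$, which is one-endedness together with pro-triviality of the fundamental pro-group. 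This is the place where the failure of $B_2$ --- and hence of the fibers of the $p_i$ --- to be finitely presented is quietly absorbed: no presentation of $B_2$ is used, only that an intrinsically deep loop may be replaced by one which is deep in the CAT(0) product $Y$.

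\emph{The nerve argument.} Cover $Z_R$ by the subcomplexes $p_i^{-1}(\beta)$, where $i\in\{1,2,3\}$ and $\beta$ ranges over the finitely many branches of $T_i$ beyond the $R$-sphere about the basepoint of $T_i$; every point of $Z_R$ has some coordinate at $T_i$-distance $\ge R$, so this is indeed a cover. Each such piece, and more generally each non-empty intersection of pieces belonging to distinct factors, is $X$ intersected with a convex subcomplex $W=T_1\times\cdots\times\beta\times\cdots$ of $Y$, i.e.\ the slab $(f|_W)^{-1}([-N,N])$; the descending and ascending links of $f|_W$ are obtained from the octahedral $S^2$ by deleting at most one vertex from each of at most two of its three antipodal pairs, and every such complex is $S^2$ or a disk --- in particular connected and simply connected --- so, for $N$ large, these slabs are connected and simply connected by Bestvina--Brady Morse theory. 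Two pieces meet if and only if they come from different trees (branches of the same tree are disjoint; branches of different trees always meet in $X$ because each $f_i$ is unbounded above and below on every branch), and every triple intersection is likewise non-empty; hence the nerve of the cover is the simplicial join $\mathcal B_1*\mathcal B_2*\mathcal B_3$ of three non-empty finite discrete sets. Since $\mathcal B_1*\mathcal B_2$ is connected and the join of a connected non-empty complex with a non-empty complex is simply connected, the nerve is simply connected (it is homotopy equivalent to a wedge of $2$-spheres). By the standard nerve/Mayer--Vietoris theorem for a finite cover by subcomplexes all of whose non-empty intersections are connected and simply connected, $Z_R$ is connected and simply connected, which completes the argument.

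\emph{Where the difficulty lies.} The one substantial point is the Bestvina--Brady claim in the previous paragraph: that the \emph{restricted} slabs $(f|_W)^{-1}([-N,N])$ are simply connected, and that this holds with a single $N$ fixed once at the start, uniformly over all the branches $\beta$ (so that $N$ does not depend on $C$). This requires the link computation together with a bound, independent of $\beta$, on how large $N$ must be for the slab over $W$ to have stabilized; everything else --- the finite cover, the identification of the nerve, the Mayer--Vietoris induction, and the finiteness of the distortion function --- is routine once this is in hand. The same scheme, incidentally, accounts for the shape of the conjecture: with $n$ tree factors the nerve becomes the join of $n$ discrete sets, homotopy equivalent to a wedge of $(n-1)$-spheres and hence $(n-2)$-connected --- for $n=2$ merely a connected graph (yielding that $B_2$ is one-ended) and for $n=3$ simply connected.
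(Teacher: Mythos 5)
Your outline is, as far as I can tell, correct, but it proves the theorem by a genuinely different route from the one in this paper. Here everything is done inside the Cayley $2$-complex of an explicit presentation of $S$ as an HNN extension of $F(a,b)\times F(c,d)$ over $K=B_2$: $s$-bands in van Kampen diagrams reduce the problem to showing $F_2\times F_2$ is simply connected at infinity \emph{in} $S$ (Lemma \ref{Reduce}), explicit product homotopies push a deep $\{a,b,c,d\}$-loop onto a $K$-loop (Lemma \ref{F2P}), and the $\Gamma(K)\times\mathbb{R}$ structure of $\langle K,s\rangle$ kills that loop far from $C$. You instead work in the Bestvina--Brady slab inside $T_1\times T_2\times T_3$, absorb the distortion of $X$ in $Y$ so that it suffices to kill loops outside large $\ell^\infty$-balls, and then run a nerve argument over the branches of the three trees. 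Two comments. First, the one place you flag as the substantial difficulty --- uniformity in $N$ of the simple connectivity of the restricted slabs $(f|_W)^{-1}([-N,N])$ --- is not actually a difficulty: once you verify (as you do) that the ascending and descending links of $f|_W$ are joins of three nonempty discrete sets, hence connected and simply connected, the Bestvina--Brady Morse Lemma gives that the inclusion of \emph{any} slab into the contractible convex subcomplex $W$ is a $\pi_0$- and $\pi_1$-isomorphism; no stabilization threshold and no dependence on $\beta$ or on $C$ enters. The genuinely load-bearing points that remain are routine but should be stated: a nerve theorem in the form ``pieces $1$-connected, pairwise intersections $0$-connected $\Rightarrow$ $Z_R$ is $1$-connected iff the nerve is'' (Bj\"orner's version for covers by subcomplexes), and a polyhedral structure on the slab making the pieces subcomplexes. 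Second, on what each approach buys: the paper's argument is elementary and self-contained (bands, product homotopies, the $1$-endedness of $K$), and it exhibits the null-homotopies explicitly; your argument imports Morse theory and the nerve theorem but is uniform in the number of factors --- with $n$ trees the links and the nerve become joins of $n$ nonempty discrete sets, i.e.\ wedges of $(n-1)$-spheres, which are $(n-2)$-connected --- so, pushed through with the higher-connectivity nerve theorem, it is a plausible strategy for the full Conjecture rather than just the case $n=3$. That makes it worth writing up carefully rather than only in outline.
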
 

We also provide an elementary algebraic proof (Lemma \ref{OneE}) that $B_2$ is 1-ended. These two results are evidence (and the first two cases) of:

\begin{conjecture} \it For $n\geq 2$ the group $B_n$ is $(n-2)$-connected at infinity.
 \end{conjecture}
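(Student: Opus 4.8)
The plan is to prove the conjecture by induction on $n$, using the short exact sequences $1\to B_{n-1}\to B_n\to F_2\to 1$ together with a geometric refinement of the extension theorem [Theorem 17.3.6, \cite{G}]. The base cases $n=2$ and $n=3$ are the two results established in this paper (one-endedness of $B_2$ and the Main Theorem \ref{Main}). For the inductive step I would assume $B_{n-1}$ is $(n-3)$-connected at infinity and deduce that $B_n$ is $(n-2)$-connected at infinity. Note first that [Theorem 17.3.6, \cite{G}] applied to the sequence above, with $N=B_{n-1}$ (so $s=n-3$ by induction) and $Q=F_2$ (infinitely-ended, so $t=-1$), produces the \emph{uncapped} value $s+t+2=n-2$ that we want; the theorem fails to deliver it only because of the finiteness cap $\min\{\,\cdot\,,m-1\}$, where the common type is $m=n-2$ (limited by $B_{n-1}\in\mathcal F_{n-2}$), giving $m-1=n-3$. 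Since $B_n$ is itself of type $\mathcal F_{n-1}$, the assertion ``$(n-2)$-connected at infinity'' is well-posed for $B_n$, and the entire task is to remove this one-dimension cap by a direct geometric argument.

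The geometric model is a tree of spaces. The sequence $1\to B_{n-1}\to B_n\to F_2\to 1$ exhibits $B_n$ as the fundamental group of a graph of groups over the rose $R_2$ with every vertex and edge group equal to $B_{n-1}$; equivalently $B_n$ acts on the Bass--Serre tree $T$ of $F_2$ (a $4$-valent tree) with all vertex and edge stabilizers $\cong B_{n-1}$. Choosing a $K(B_{n-1},1)$ with finite $(n-2)$-skeleton and forming the associated graph of spaces, the universal cover $X$ is a tree of spaces over $T$ whose fibers are copies of the universal cover $Y$ of that $K(B_{n-1},1)$; because $B_n$ is of type $\mathcal F_{n-1}$, $X$ may be taken with $B_n$ acting cocompactly on its $(n-1)$-skeleton. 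I would study the topology at infinity of $X$ through the projection $p\colon X\to T$, decomposing neighborhoods of infinity into a \emph{fiber direction} (going to infinity inside a single copy of $Y$, controlled by the inductive hypothesis that $Y$ is $(n-3)$-connected at infinity) and a \emph{tree-end direction} (going to infinity toward the Cantor set of ends of $T$).

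The filling argument then realizes the value $s+t+2=n-2$ geometrically. Given a map of a $k$-sphere ($k\le n-2$) into the complement of a large compactum, I would first homotope it, using the tree projection $p$, so that its image is spread across a controlled subtree and each piece lying over a single vertex or edge is a relative cycle of dimension $\le n-3$ in the corresponding fiber. Each such low-dimensional piece can be filled at infinity in its fiber by the inductive hypothesis, and the $(n-2)$-dimensional filling of the whole sphere is then assembled across the edges of $T$, the extra dimension of room coming precisely from the tree (equivalently, from the $t=-1$ end-structure of $F_2$). This is why the geometric argument beats [Theorem 17.3.6, \cite{G}]: detecting $(n-2)$-connectivity at infinity of $B_n$ requires only type $\mathcal F_{n-1}$ and fillings in $Y$ of dimension $\le n-3$, both of which are available, whereas the homological-algebra proof of the cited theorem is forced down to $m-1=n-3$ by the finiteness of the chain model for $B_{n-1}$.

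The main obstacle is exactly the top-dimensional filling. A generic $(n-2)$-sphere near infinity will have pieces that genuinely occupy dimension $n-2$ inside individual fibers, and there $B_{n-1}$'s finiteness type $\mathcal F_{n-2}$ is one short of what a naive fiberwise filling would require. Everything hinges on showing that such a sphere can always be pushed off the top fiber dimension --- subdivided and homotoped so that the only irreducibly $(n-2)$-dimensional filling happens in the tree direction, not within a fiber --- while keeping the entire homotopy in the complement of the prescribed inner compactum. Making this reduction quantitative, with explicit control of how far the filling strays into $X$, is the all-dimensions analog of the delicate geometric estimate carried out for $n=3$ in the proof of Theorem \ref{Main}, and I expect it to be the crux of the general argument.
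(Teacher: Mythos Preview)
The statement you are addressing is a \emph{conjecture}, not a theorem; the paper does not prove it in general.  The paper explicitly establishes only the two base cases $n=2$ (Lemma \ref{OneE}) and $n=3$ (Theorem \ref{Main}) and offers them as evidence for the conjecture.  So there is no ``paper's own proof'' of the full statement against which to compare your attempt.

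Your proposal is a reasonable programme for attacking the conjecture, and you correctly diagnose why the homological theorem [Theorem 17.3.6, \cite{G}] falls one dimension short.  But what you have written is a plan, not a proof: the entire content of the inductive step is packed into the assertion that an $(n-2)$-sphere near infinity in the tree-of-spaces can always be ``pushed off the top fiber dimension'' while staying outside the given compactum.  You yourself flag this as ``the main obstacle'' and say you ``expect it to be the crux''; that is precisely the statement that needs a proof, and you have not supplied one.  Concretely, a map $S^{n-2}\to X\setminus C$ will, after transversality to the edge spaces, decompose into pieces that are genuine $(n-2)$-dimensional relative cycles in individual fibers $Y$, and the inductive hypothesis gives you nothing about filling those (it only controls $(n-3)$-spheres in $Y$).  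The mechanism by which the tree direction absorbs that missing dimension is exactly what must be made precise, and the paper's proof for $n=3$ does \emph{not} proceed by this tree-of-spaces picture, so it does not furnish a template.  Instead, for $n=3$ the paper works inside the HNN description of $B_3$ with stable letter $s$ over base $F_2\times F_2$: it uses van Kampen diagrams and $s$-bands to reduce to showing $F_2\times F_2$ is simply connected at infinity in $S$ (Lemma \ref{Reduce}), then an explicit syllable-by-syllable rewriting of $\{a,b,c,d\}$-paths into $K$-paths staying far from the basepoint (Lemma \ref{F2P}), and finally the product structure $K\times\mathbb Z$ inside $S$.  None of these ingredients generalizes in an obvious way to the higher $B_n$, and your outline does not indicate how they would.
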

 We note that in order for a group to be $(k-1)$-connected at infinity, it must be of type $\mathcal F_{k}$ (see \S \ref{DN}) and so, if true, the Conjecture would be best possible. 
 Parts (ii) and (iii) of the main theorem of \cite{GM86} imply that if $G$ is a finitely presented simply connected at infinity group then $H^2(G,\mathbb ZG)=0$ (also see [Theorem 16.5.2, \cite{G}]). A direct consequence of this fact and Theorem \ref{Main} is:
 \begin{corollary} For Stallings' group $S$,  $H^2(S,\mathbb ZS)=0$.
 \end{corollary}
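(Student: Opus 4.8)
The plan is to deduce the Corollary directly from Theorem \ref{Main} together with the cohomological characterization of simple connectivity at infinity recorded in the excerpt. Two hypotheses must be in place before invoking that characterization. First, Stallings' group $S = B_3$ must be finitely presented: this is exactly the assertion (due originally to Stallings, and subsumed by Bieri's theorem) that $B_3$ is of type $\mathcal F_2$, as recalled in the Introduction. Second, $S$ must be simply connected at infinity, which is precisely the content of Theorem \ref{Main}.

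With both hypotheses secured, I would apply the implication quoted above: by parts (ii) and (iii) of the main theorem of \cite{GM86} (equivalently [Theorem 16.5.2, \cite{G}]), any finitely presented group $G$ that is simply connected at infinity satisfies $H^2(G,\mathbb ZG)=0$. Taking $G = S$ immediately yields $H^2(S,\mathbb ZS)=0$, which is the assertion of the Corollary.

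The only point requiring care --- rather than a genuine obstacle --- is to confirm that the hypotheses genuinely match those of the cited theorem. For a finitely presented group, ``simply connected at infinity'' is independent of the choice of finite model (the Cayley $2$-complex, or any $K(G,1)$ with finite $2$-skeleton), so the invariant established in Theorem \ref{Main} is the same one appearing in \cite{GM86}. The mechanism behind the implication is the identification, for such $G$, of $H^2(G,\mathbb ZG)$ with a compactly supported cohomology group of the universal cover of a finite model, whose vanishing is forced by the triviality of the fundamental pro-group at infinity. Since this implication is quoted directly in the excerpt, the Corollary follows from Theorem \ref{Main} with no further computation.
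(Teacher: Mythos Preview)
Your argument is correct and matches the paper's own treatment: the Corollary is stated there as a direct consequence of Theorem \ref{Main} together with the cited implication from \cite{GM86} (or [Theorem 16.5.2, \cite{G}]) that a finitely presented, simply connected at infinity group has vanishing $H^2(G,\mathbb ZG)$. Your additional remarks about verifying the hypotheses are sound but go slightly beyond what the paper spells out.
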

\noindent This article is organized as follows. Section \ref{DN} contains the basic definitions and notation used in our proofs. In particular, we discuss the idea of a finitely generated group being simply connected at infinity in a finitely presented group as well as the idea of a finitely generated group being simply connected at infinity. These are fundamentally important ideas in the proof of our lemmas and main theorem. 
Section \ref{StG} describes presentations of the groups involved as well as the spaces we work in. Section \ref{vKD} contains an elementary description of van Kampen diagrams and $s$-bands. These are primary tools used to prove some of our lemmas. 
Section \ref{L} contains proofs of our lemmas. We prove that the kernel $K$ of the map $F_2\times F_2\to \mathbb Z$ described in the introduction is 1-ended (Lemma \ref{OneE}) and construct useful generating sets for this group. Stallings' group $S$ is an HNN extension of $F_2\times F_2$ over $K$. We use van Kampen diagrams connected to this HNN-extension, to reduce the problem of showing that $S$ is simply connected at infinity to showing that $F_2\times F_2$ is simply connected at infinity in $S$ (Lemma \ref{Reduce}). Lemma \ref{F2P} basically reduces this problem to showing that $K$ is simply connected at infinity in $S$. 
Finally, Section \ref{SCI} combines the lemmas of the previous sections to prove that $S$ is simply connected at infinity. 

\noindent {\it Acknowledgements:} We thank Francisco Lasheras for bring the question of the simple connectivity at infinity of Stallings' group to our attention and for many helpful conversations.
 \section{Definitions and Notation} \label{DN}
Let $D^k$ be the {\it $k$-disk} in $\mathbb R^{k}$ (all points of distance $\leq 1$ from the origin) and $S^{k-1}$ the {\it $(k-1)$-sphere} in $\mathbb R^k$ (all points of distance $1$ from the origin). Let $n$ be an integer $\geq 1$. A path connected space $X$ is {\it $n$-connected}  ({\it $n$-aspherical}) if for every $1\leq k\leq n$ (respectively $2\leq k\leq n$), every map $S^k\to X$ extends to a map of $D^k\to X$. A CW-complex $X$ is {\it $n$-connected} ($n$-aspherical) if and only if $X^{n+1}$ (its $(n+1)^{st}$ skeleton) is $n$-connected ($n$-aspherical). A space $X$ is {\it aspherical} if $X$ is $n$-aspherical for all $n$. 

\begin{proposition} [Proposition 7.1.3,\cite{G}] 
A path connected CW complex $X$ is $n$-aspherical if and only if its universal cover $\tilde X$ is $n$-connected. The complex $X$ is aspherical if and only if $\tilde X$ is contractible. 
\end{proposition}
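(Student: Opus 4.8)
The plan is to translate both assertions into the language of homotopy groups and then invoke two standard tools: the behaviour of homotopy groups under covering projections, and Whitehead's theorem.

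First I would record the two elementary facts I need. (a) If $p\colon\tilde X\to X$ is the universal covering projection, then $\pi_1(\tilde X)=1$ and $p_*\colon\pi_k(\tilde X)\to\pi_k(X)$ is an isomorphism for every $k\geq 2$; this is standard covering space theory (lift homotopies of spheres, using that $S^k$ is simply connected for $k\geq 2$). (b) For a path connected space $Y$ and $k\geq 1$, the condition ``every map $S^k\to Y$ extends over $D^{k+1}$'' is equivalent to ``every such map is (freely) null homotopic'' (an extension over the cone $D^{k+1}$ on $S^k$ \emph{is} a null homotopy), and for path connected $Y$ this is in turn equivalent to $\pi_k(Y,y_0)=0$ for some (hence every) basepoint $y_0$, because the free homotopy classes of maps $S^k\to Y$ are the orbits of the $\pi_1(Y)$-action on $\pi_k(Y,y_0)$ and the trivial element is a fixed point. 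Consequently ``$X$ is $n$-aspherical'' means exactly $\pi_k(X)=0$ for $2\leq k\leq n$, and ``$\tilde X$ is $n$-connected'' means exactly $\pi_k(\tilde X)=0$ for $1\leq k\leq n$.

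With (a) and (b) in hand the first equivalence is formal: $\tilde X$ is $n$-connected iff $\pi_1(\tilde X)=1$ (which always holds for the universal cover) and $\pi_k(\tilde X)=0$ for $2\leq k\leq n$; by the isomorphisms in (a) the latter is equivalent to $\pi_k(X)=0$ for $2\leq k\leq n$, i.e.\ to $X$ being $n$-aspherical. Letting $n$ range over all integers gives that $X$ is aspherical iff $\pi_k(\tilde X)=0$ for all $k\geq 1$, i.e.\ iff $\tilde X$ is weakly contractible. The only remaining point is to promote ``weakly contractible'' to ``contractible'' for $\tilde X$, and this is where the hypothesis that $X$ (hence $\tilde X$) is a CW complex is essential: by Whitehead's theorem the constant map $\tilde X\to\{\ast\}$, being a weak homotopy equivalence between CW complexes, is a genuine homotopy equivalence. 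The reverse implications in both statements are immediate, since if $\tilde X$ is $n$-connected (resp.\ contractible) then $\pi_k(\tilde X)=0$ for $1\leq k\leq n$ (resp.\ for all $k$), and (a) transports the vanishing to $\pi_k(X)$ for $k\geq 2$.

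There is no substantial obstacle here; the statement is a packaging of well known facts. The only things requiring a little care are the basepoint bookkeeping in fact (b) --- making sure the ``extension over a disk'' formulation is genuinely basepoint free and matches the vanishing of homotopy groups on a path connected space --- and citing Whitehead's theorem in precisely the form ``a CW complex with all homotopy groups trivial is contractible''.
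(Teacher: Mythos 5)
Your argument is correct and is the standard proof of this fact; the paper itself gives no proof, simply citing Proposition 7.1.3 of Geoghegan, whose argument is exactly the one you outline (covering projections induce isomorphisms on $\pi_k$ for $k\geq 2$, the disk-extension condition is equivalent to vanishing of homotopy groups on a path connected space, and Whitehead's theorem upgrades weak contractibility of the CW complex $\tilde X$ to contractibility). The only implicit point worth making explicit is that the universal cover of a CW complex again carries a CW structure, which is what licenses the appeal to Whitehead's theorem.
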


A space $Y$ is {\it $n$-connected at infinity} if for any compact $C\subset Y$ there is a compact $D\subset Y$ such that any map of $S^n\to Y-D$ can be extended to a map $D^{n+1}\to X-C$. Let $G$ be a group. A {\it $K(G,1)$-complex} is a path connected aspherical CW-complex $X$ with $\pi_1(X)$ isomorphic to $G$. The space $X$ is also called an {\it Eilengerg-MacLane complex of type $(G,1)$} or a {\it classifying space for $G$}.  A group $G$ has type $\mathcal F_n$ if there exists a $K(G,1)$-complex having finite $n$-skeleton.  Let $G$ be a group of type $\mathcal F_n$ where $n\geq 1$ and let $X$ be a $K(G,1)$-complex with finite $n$-skeleton. We say that $G$ is {\it $(n-1)$-connected at infinity} if $X^n$ (the universal cover of the $n$-skeleton of $X$) is $(n-1)$-connected at infinity. In particular:

\begin{definition} \label{SCinf} 
The one ended finitely presented group $G$ is {\it simply connected at infinity (1-connected at infinity)} if for some (any) finite connected complex $A$ with $\pi_1(A)$ isomorphic to $G$, the universal cover $\tilde A$ has the following property: For any finite subcomplex $C\subset \tilde A$ there is a finite subcomplex $D\subset \tilde A$ such that loops in $\tilde A-D$ are homotopically trivial in $\tilde A-C$. 
\end{definition}

The following definition explains what it means for a finitely generated subgroup of a finitely presented group $G$ to be simply connected at $\infty$ in $G$. This notion was first considered in \cite{M6} and has been used successfully to prove interesting classes of finitely presented groups are simply connected at $\infty$. This notion is fundamental in our proof that Stallings' group is simply connected at infinity. 

\begin{definition}\label{SCin} 
A finitely generated subgroup $A$ of a finitely presented group $G$ is {\it simply connected at $\infty$ in $G$ (or relative to $G$)} if for some (equivalently any by [Lemma 2.9, \cite{M6}])  finite presentation $\mathcal P=\langle \mathcal A,\mathcal B\ | \ R\rangle$ of the group $G$  (where $\mathcal A$ generates $A$ and $\mathcal A\cup \mathcal B$ generates $G$), the Cayley 2-complex $\Gamma(\mathcal P)$ has the following property:

Given any compact set $C\subset \Gamma ( \mathcal P)$ there is a compact set $D\subset \Gamma ( \mathcal P)$ such that any edge path loop in  $\Gamma (A,\mathcal A)-D$ is homotopically trivial in $\Gamma ( \mathcal P)-C$. 
\end{definition}
In order to define what  it means for a finitely generated group $G$ to be simply connected at $\infty$, we must know that $G$ embeds in some finitely presented group. In 1961, G. Higman proved:

\begin{theorem} [Higman, \cite{Hig61}]  A finitely generated infinite group $G$ can be embedded in a finitely presented group if and only if  $G$ is recursively presented.
\end{theorem}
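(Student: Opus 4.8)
The plan is to prove the two implications separately; the reverse implication, that a recursively presented finitely generated group embeds in a finitely presented one, is the substantial half and is precisely Higman's contribution, so I will spend most of the effort there.

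First I would dispose of the easy direction. Suppose $G$ is generated by a finite set $\{g_1,\dots,g_m\}$ and embeds in a group $H$ with finite presentation $\langle y_1,\dots,y_p\mid s_1,\dots,s_q\rangle$, and write each $g_i$ as a word $w_i(y_1,\dots,y_p)$. Let $T$ be the set of words $v$ in $g_1^{\pm1},\dots,g_m^{\pm1}$ that represent the identity of $H$. Then $T$ is recursively enumerable: a word $v$ lies in $T$ exactly when the substituted word $v(w_1,\dots,w_m)$ is freely equal to some product of conjugates of the $s_j^{\pm1}$, and one can systematically enumerate all such products, freely reduce, and test equality, which semidecides membership in $T$. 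Since $T$ is the kernel of $F(g_1,\dots,g_m)\to H$ and $G$ is its image, we get $G=\langle g_1,\dots,g_m\mid T\rangle$ with $T$ recursively enumerable, so $G$ is recursively presented.

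For the hard direction I would follow Higman's route through \emph{benign} subgroups: a subgroup $H$ of a finitely generated group $K$ is benign in $K$ if $K$ embeds in some finitely presented group $K^{*}$ with $H=K\cap L$ for a finitely generated $L\le K^{*}$. The strategy has three stages. (1) Establish closure properties of the family of benign subgroups of a fixed finitely generated group: in a free group of finite rank it contains every finitely generated subgroup (since the group is already finitely presented), and in general it is closed under finite intersection, under the subgroup generated by two benign subgroups, under images and preimages along basis-to-basis homomorphisms of free groups, under direct-factor and fibre-product type operations in $F\times F$, and under passing between $F$ and $F\times\Z$. Each closure property is verified by an explicit HNN extension or amalgamated free product over the relevant \emph{finitely generated} associated subgroups, which keeps the ambient group finitely presented. (2) Prove the Main Lemma: for $F$ free of finite rank ($2$ suffices) a subgroup of $F$ is benign in $F$ if and only if it is recursively enumerable as a subset of $F$; the forward direction is routine enumeration inside the finitely presented witness, and the converse is the heart of the matter. (3) Assemble: given $G\cong F/N$ with $F$ free of finite rank and $N$ the normal closure of a recursively enumerable set of words, observe that $N$ is itself a recursively enumerable subset of $F$, hence benign by the Main Lemma, and then a final HNN-type construction—again over finitely generated subgroups, hence preserving finite presentability—converts a finitely presented group containing $F$ with $N=F\cap L$ into a finitely presented group containing the quotient $F/N\cong G$.

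The main obstacle is the converse half of the Main Lemma: showing that every recursively enumerable subset $\Sigma$ of a free group generates a benign subgroup. The plan here is to use a recursion-theoretic normal form—expressing the enumeration of $\Sigma$ via the basic primitive recursive operations together with one minimisation, equivalently encoding a Turing machine enumerating $\Sigma$—and to translate each elementary step of that computation into one of the benign-closure operations catalogued in stage (1). This requires constructing by hand a few atomic benign subgroups (for instance diagonal-type subgroups $\{(w,w):w\in\Sigma_0\}$ of $F\times F$ for explicit building-block sets $\Sigma_0$, and subgroups encoding concatenation and the successor and projection functions on Gödel-numbered words) and then checking that composition, primitive recursion, and minimisation correspond exactly to forming generated subgroups, intersections, and images or preimages of benign subgroups under basis-to-basis maps. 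The delicate point throughout is the bookkeeping: ensuring that every associated subgroup stays finitely generated across an a priori unbounded iteration, so that finite presentability is never lost—this is exactly what the benign-subgroup formalism is engineered to guarantee.
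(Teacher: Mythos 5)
This statement is not proved in the paper at all: it is quoted as Higman's embedding theorem and cited to \cite{Hig61}, serving only to make the subsequent definition of simple connectivity at infinity for finitely generated (recursively presented) groups meaningful. So there is no in-paper argument to compare yours against. Judged on its own terms, your easy direction is complete and correct: enumerating all products of conjugates of the relators of the ambient finitely presented group and freely reducing semidecides triviality of a word in the generators of $G$, so the relation set of $G$ on a finite generating set is recursively enumerable.

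For the hard direction, what you have written is an accurate roadmap of Higman's original proof via benign subgroups, but it is a roadmap rather than a proof. The entire content of the theorem is concentrated in the step you yourself flag as ``the heart of the matter''---that every recursively enumerable subset of a finite-rank free group generates a benign subgroup---and your treatment of it consists of naming the strategy (encode a partial recursive function by atomic benign subgroups and closure operations) without carrying it out. None of the atomic subgroups are actually constructed; none of the closure properties are verified, and each one genuinely requires a Britton's-lemma or amalgam normal-form computation to confirm that the intersection with the relevant subgroup is exactly what is claimed; and the final step, passing from a benign normal subgroup $N$ of $F$ to an embedding of the quotient $F/N$, is asserted to be ``an HNN-type construction'' without specifying it. As a plan it is the correct, standard plan; as a proof it has a gap located precisely where the theorem is hard. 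Since the paper only uses the statement as a black box, simply citing \cite{Hig61} (as the author does) is the appropriate move here.
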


\begin{definition}\label{SCfg} 
A finitely generated and recursively presented group $A$ is {\it simply connected at $\infty$} if for any finitely presented group $G$ and subgroup $A'$ isomorphic to $A$, the subgroup $A'$ is simply connected at $\infty$ in $G$. 
\end{definition}

We will use the following result in $\S$\ref{SCI}.

\begin{theorem} [Theorem 5.1, \cite{M6}] \label{sc}
Suppose the recursively presented group $G$ is finitely generated and isomorphic to $A\times B$ where $A$ and $B$ are finitely generated infinite groups and $A$ is 1-ended. Then $G$ is simply connected at $\infty$.
\end{theorem}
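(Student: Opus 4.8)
The plan is to unwind Definition~\ref{SCfg}: it is enough to show that whenever $H$ is a finitely presented group containing a copy $G'$ of $A\times B$, the subgroup $G'$ is simply connected at $\infty$ in $H$ in the sense of Definition~\ref{SCin}. Fix such $H$ and $G'$, write $G'=A\times B$, choose finite generating sets $\mathcal A_A$ of $A$ and $\mathcal A_B$ of $B$, put $\mathcal A=\mathcal A_A\sqcup\mathcal A_B$, and extend $\mathcal A$ to a finite presentation $\mathcal P=\langle\,\mathcal A\sqcup\mathcal B\mid R\,\rangle$ of $H$. Since each commutator $[\alpha,\beta]$ with $\alpha\in\mathcal A_A,\ \beta\in\mathcal A_B$ is a relation of $H$, and since simple connectivity at $\infty$ of $G'$ in $H$ does not depend on the presentation ([Lemma 2.9, \cite{M6}]), I would arrange that all such commutators lie in $R$. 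Then the Cayley $2$-complex $\Delta:=\Gamma(\mathcal P)$ of $H$ contains, on the vertex set $G'$, the subcomplex $\Xi$ spanned by the $\mathcal A$-edges and the commuting-square $2$-cells; as a CW complex $\Xi=\Gamma(A,\mathcal A_A)\times\Gamma(B,\mathcal A_B)$, equipped with coordinate retractions $p_A,p_B$ onto the factor Cayley graphs.

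It is crucial to control sizes by the intrinsic word metric of $G'$, not by $\Delta$: a ball of $\Gamma(G',\mathcal A)$ is finite, hence a compact subset of $\Delta$, so given a compact $C\subseteq\Delta$ I would first replace it by a finite subcomplex $C'\supseteq C$ that also contains $\overline B_{G'}(e,R_0)$, where $R_0$ is chosen with $C\cap G'\subseteq\overline B_{G'}(e,R_0)$; this is what guards against distortion of $G'$ in $H$. Given $C'$, the required $D$ is produced from two claims: (1) choosing $D=\overline B_{G'}(e,N)$ with $N\gg R_0$, every edge loop of $\Gamma(G',\mathcal A)$ missing $D$ can be homotoped, through $\Delta-C'$, into a ``corner region'' $\Xi_\infty\subseteq\Xi$ all of whose vertices are at $G'$-distance $>R_0$ from $e$ and with $p_A$-image outside a fixed ball of $\Gamma(A,\mathcal A_A)$ large enough to carry all of the ``short'' behaviour of that graph; and (2) every edge loop of $\Xi_\infty$ bounds a disk in $\Delta-C'$. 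Since $\Xi_\infty$ is disjoint from $C'\supseteq C$, claims (1) and (2) say exactly that $G'$ is simply connected at $\infty$ in $H$.

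The mechanisms for (1) and (2) are the commuting squares together with the two hypotheses. For (1): square moves let one ``sort'' the defining word of a loop and ``slide'' portions of the loop along tubes of commuting squares; the infiniteness of $B$ supplies a proper ray in $\Gamma(B,\mathcal A_B)$ along which to push the $B$-coordinate out to where $\Delta$ can fill loops off $C'$, and the $1$-endedness of $A$ supplies, around the bounded ball of ``short'' vertices of $\Gamma(A,\mathcal A_A)$, the connectivity needed to reroute the $A$-coordinate without letting it drift back in. For (2): the product structure of $\Xi$ lets one replace an edge loop of $\Xi_\infty$, up to homotopy in $\Xi_\infty$, by a loop lying in a single $\Gamma(A,\mathcal A_A)$-slice followed by a loop lying in a single $\Gamma(B,\mathcal A_B)$-slice; each of these is null-homotopic in the simply connected complex $\Delta$, and one arranges the two filling disks to miss $C'$ by sliding the first far out in the $B$-direction and the second far out along the $1$-ended part of the $A$-direction before filling. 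Note that $\Xi$ itself is very far from simply connected at infinity — this already fails for $\Gamma(F_2)\times\Gamma(F_2)$, and indeed $F_2\times F_2$ is not simply connected at $\infty$ — so the $2$-cells of $\Delta$ outside $\Xi$ genuinely must be used, and it is exactly the $1$-endedness of $A$ (not merely ``$A$ infinite'') that makes the fillings in (1) and (2) go through.

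The hard part is the uniform control: every sort, slide, reroute, and filling disk above has to stay off the fixed compact set $C'$, and no metric estimate in $\Delta$ is available because $G'$ may be arbitrarily distorted in $H$ — so every homotopy must be assembled inside the grid $\Xi$, where intrinsic distances exist, with only the final filling disks permitted to leave $\Xi$, necessarily into parts of $\Delta$ that have been driven beyond the reach of $C'$. Making this bookkeeping precise — in effect, formulating and proving the exact sense in which crossing a $1$-ended group with an infinite group kills $\pi_1$ at infinity inside every ambient finitely presented group — is the content of [Theorem 5.1, \cite{M6}], and is where essentially all of the work lies.
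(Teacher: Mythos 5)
The paper does not prove this statement; it imports it verbatim as [Theorem 5.1, \cite{M6}], so there is no internal proof to compare against. Judged on its own terms, your proposal is a plausible outline of the right strategy (reduce via Definition \ref{SCfg} to relative simple connectivity at infinity in an arbitrary finitely presented overgroup $H$, work in the product grid $\Xi=\Gamma(A,\mathcal A_A)\times\Gamma(B,\mathcal A_B)$ inside $\Gamma(\mathcal P)$, push loops outward using the infinite factor and reroute using $1$-endedness of $A$), but it is not a proof: both of your claims (1) and (2) are exactly the content of the theorem, and you concede in your final paragraph that ``essentially all of the work'' is left to the citation. That is a genuine gap, not a stylistic omission.

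The most concrete missing idea is the one that makes ``slide far out, then fill'' legitimate. Sliding a loop far from $C'$ does not by itself produce a filling disk that misses $C'$: a null-homotopy of a loop in the simply connected complex $\Delta$ can a priori pass anywhere. What is needed is the cocompactness/equivariance observation used in this paper's own Section \ref{SCI}: a loop is determined up to translation by its edge-label word, so it bounds a disk inside the $j$-neighborhood of its basepoint for some $j$ depending only on that word; translating the loop by the group action translates the disk. Hence one fixes the loop first, extracts $j$, and only then pushes the loop (via a tube of commuting squares, i.e.\ your ``slide'') until its $j$-neighborhood clears $C'$. Your write-up inverts this quantifier order and never isolates the bounded-filling-radius step, so claim (2) is unsupported as written. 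Similarly, for claim (1) the tube homotopies themselves must avoid $C'$; asserting that $1$-endedness of $A$ ``supplies the connectivity needed to reroute'' names the hypothesis without exhibiting the rerouting (compare the careful case analysis the present paper must do in Lemma \ref{F2P} for the analogous sorting argument in $F(a,b)\times F(c,d)$). Your side remarks are sound --- in particular $F_2\times F_2$ is indeed not simply connected at infinity, which correctly signals that $1$-endedness of $A$ must be used --- but the argument itself is a program, and the program's two key steps are unexecuted.
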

 
  For any set $A$, $F(A)$ is the free group on $A$. If $\mathcal A$ is a finite generating set for a group $G$, let $\Gamma(G,\mathcal A)$ be the Cayley graph of $G$ with respect to $\mathcal A$. So the vertex set of $\Gamma(G,\mathcal A)$ is $G$ and there is an edge labeled $a\in \mathcal A$ from $v$ to $w$ if $va=w$.
Given a finite presentation $\mathcal P=\langle \mathcal A : R\rangle$ of a group $G$, the {\it Cayley 2-complex for $\mathcal P$} is a 2-complex $\Gamma(\mathcal P)$ with 1-skeleton equal to  $\Gamma (G,\mathcal A)$. At each vertex $v$ of $\Gamma(\mathcal P)$ there is a 2-cell with edge labeling (beginning at $v$) equal to an element of $R$. The space $\Gamma(\mathcal P)$ is simply connected, the group $G$ acts as covering translations on $\Gamma(\mathcal P)$ and the quotient space $\Gamma(\mathcal P)/G$ (the {\it presentation complex for $\mathcal P$}) has fundamental group isomorphic to $G$. If $\mathcal B\subset \mathcal A$ and $B$ is the subgroup of $G$ generated by $\mathcal B$ then $\Gamma(B, \mathcal B)\subset \Gamma(G,\mathcal A)\subset \Gamma(\mathcal P)$. If $\mathcal Q$ is a subpresentation of $\mathcal P$ for a subgroup $Q$ of $G$ then $\Gamma(\mathcal Q)\subset \Gamma(\mathcal P)$. For each $g\in G$, $g\Gamma(B,\mathcal B)\subset \Gamma(G,\mathcal A)$ and $g\Gamma(\mathcal Q)\subset \Gamma(\mathcal P)$. 

\newpage

\section{Stallings' Group and Related Spaces}\label{StG} 
S. Gersten [Proposition 2.3, \cite{Ger95}] exhibits the following presentation of Stallings' group. This is also the presentation used in \cite{BBMS97} and \cite{DERY09}. Here $s^k$ means $k^{-1}sk$ and $[a,b;c,d]$ means both $a$ and $b$ commute with both $c$ and $d$.  
$$S=\langle a,b,c,d,s: [a,b;c,d], s^a=s^b=s^c=s^d\rangle.$$
Expanding our presentation of $S$ we have $b^{-1}s^{-1}ba^{-1}sa=1$. Equivalently, $s^{-1}ba^{-1}s=ba^{-1}$. Similar expansions imply that the latter defining relations of our presentation of $S$ are equivalent to the commutation relations:
$$[s;ba^{-1}, ca^{-1}, da^{-1}, cb^{-1}, db^{-1}, dc^{-1}].$$

Hence, $S$ is an HNN extension (with stable letter $s$) of the 1-ended group $F(a,b)\times F(c,d)$ over the group $K$ generated by the six elements
$$e_1=ba^{-1}, e_2=ca^{-1}, e_3=da^{-1},  e_4=cb^{-1}, e_5=db^{-1}, e_6=dc^{-1}$$
where $s$ commutes with each of these elements. We will show (Lemma \ref{NFP}) that the subgroup $K$ of $F(a,b)\times F(c,d)$ generated by $\{e_1,\ldots, e_6\}$ is normal and hence equal to the kernel of the map $F(a,b)\times F(c,d)\to \mathbb Z$ that takes each element of $\{a,b,c,d\}$ to $1$ (a generator of $\mathbb Z$). In particular, $K$ consists of all words in $\{a,b,c,d\}^{\pm 1}$ with 0-exponent sum. Note that $K$ has (finite) generating set consisting of all two letter words of the form $uv^{-1}$ or $u^{-1}v$ for $u,v$ distinct in $\{a,b,c,d\}$. List these words as $w_1,\ldots, w_q$.

In all that follows we will use the following presentation for $S$ 
$$\mathcal S=\langle a,b,c,d,s,e_1,\ldots, e_q: e_1=w_1, \ldots, e_q=w_q, [a,b ; c,d], [s ; e_1,\ldots, e_q ]\rangle.$$ 
Consider the following presentations for $F(a,b)\times F(c,d)$:
$$\mathcal Q_1=\langle a,b,c,d : [a,b ; c,d]\rangle,$$
$$\mathcal Q_2=\langle a,b,c,d, e_1,\ldots, e_q:[a,b;c,d], e_1=w_1, \ldots, e_q=w_q\rangle.$$ 
Let $X=\Gamma(\mathcal S)$ (with vertex set $S$), $\Gamma_1$ is the Cayley graph of $F(a,b)\times F(c,d)$ with respect to $\{a,b,c,d\}$ and $\Gamma_2$ is the Cayley graph of $F(a,b)\times F(c,d)$ with respect $\{a,b,c,d,e_1,\ldots, e_q\}$. Let $E=\{e_1,\ldots, e_q\}$ and $\Gamma(K)=\Gamma(K, E)$. Let $\ast$ be the identity vertex of $X$ then:  
$$\ast\in\Gamma(K)\subset \Gamma_2\subset\Gamma(\mathcal Q_2)\subset X \hbox{ and } \ast\in \Gamma_1\subset \Gamma(\mathcal Q_1)\subset\Gamma(\mathcal Q_2).$$ 

If $A$ is a subcomplex of $X$, then let $N(A)$ be the full subcomplex of $X$ with vertex set equal to all vertices of $A$ plus all vertices an edge away from a vertex of $A$. 

\section{van Kampen Diagrams and $s$-Bands}\label{vKD} 

Suppose $G$ has a finite presentation $\mathcal P=\langle S:R\rangle$. This means that there is a epimorphism of the free group $q:F(S)\to G$  and the kernel  of $q$ is $N(R)$, the normal closure of $R$ in $F(S)$. First observe that each element of $N(R)$ is a product of conjugates of elements of $R^{\pm1}$. Each non-trivial element of $F(S)$ is a product of elements of $S^{\pm 1}$ called an {\it $S$-word}.  Suppose $w$ is a reduced $S$-word (no occurrence of $aa^{-1}$ (or $a^{-1}a$) in $w$) in the kernel of $q$, then $w=(u_1^{-1}r_1u_1)\cdots (u_n^{-1}r_nu_n)$ where $u_i\in F(S)$ and $r_i\in R^{\pm 1}$ for all $i$. This means that if all occurrences of $aa^{-1}$ (or $a^{-1}a$) in $(u_1^{-1}r_1u_1)\cdots (u_n^{-1}r_nu_n)$ are repeatedly eliminated, we are left with the word $w$. Clearly we can draw (see Figure \ref{Fig1}) an edge path loop with edge labeling determined by the $S$-word $(u_1^{-1}r_1u_1)\cdots (u_n^{-1}r_nu_n)$ in the plane (where the subpath for each $r_i$ is a loop). 
\begin{figure}
\vbox to 3in{\vspace {-2in} \hspace {-.7in}
\hspace{-1 in}
\includegraphics[scale=1]{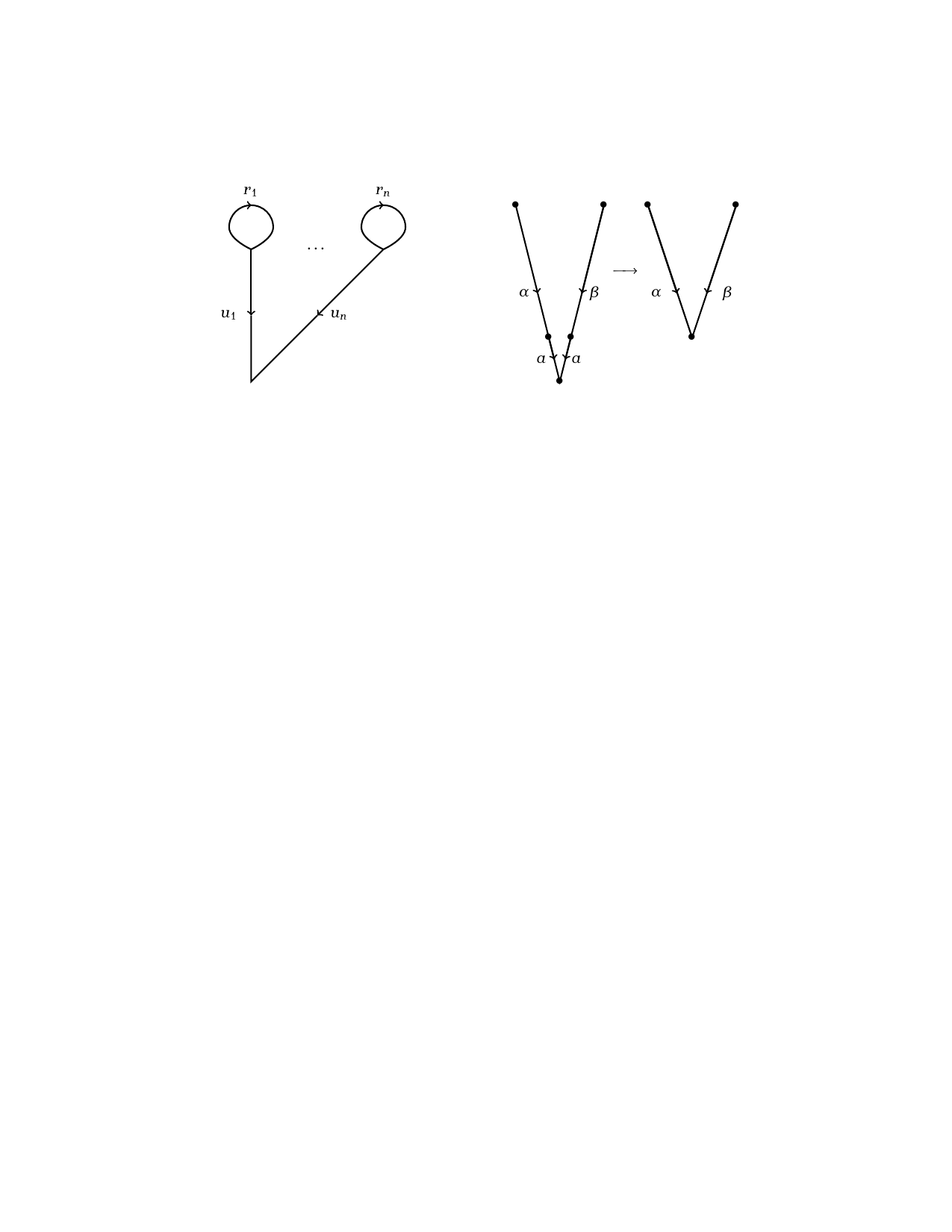}
\vss }
\vspace{-1.5in}
\caption{van Kampen Diagrams and Collapsing Edges} 
\label{Fig1}
\vspace{-.1in}
\end{figure}
Collapsing all adjacent edges labeled $aa^{-1}$ (or $a^{-1}a$)  gives a loop in the plane with edge labeling equal to $w$. Each region inside this loop is bounded by a loop with label in $R^{\pm 1}$. This labeled graph is a van Kampen diagram for $w$. 

\begin{lemma}\label{map} 
Suppose  $\mathcal P=\langle S:R\rangle$ is a finite presentation of a group $G$. Given a  van Kampen diagram $K$ for an $S$-word $w$ and a vertex $v$ of $\Lambda$ (the Cayley graph of $G$ with respect to the generating set $S$) there is map of $K$ to $\Lambda$ respecting edge labels such that the initial vertex of $w$ is mapped to $v$. 
\end{lemma}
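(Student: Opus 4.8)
The plan is to build the map cell by cell, starting from the basepoint, and to observe that it is essentially forced. Recall that in the set-up of this section a van Kampen diagram $K$ for $w$ is a finite connected planar labeled graph with a distinguished boundary vertex $v_0$ (the initial vertex of $w$), each of whose bounded complementary regions is bounded by an edge path loop labeled by an element of $R^{\pm 1}$, and whose outer boundary loop reads $w$. The Cayley graph $\Lambda=\Gamma(G,S)$ has the defining property that at each vertex and for each $a\in S^{\pm 1}$ there is exactly one edge labeled $a$ issuing from that vertex.

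First I would define $\phi$ on vertices. Set $\phi(v_0)=v$. For an arbitrary vertex $u$ of $K$, choose an edge path $p$ in $K$ from $v_0$ to $u$, read the labels along $p$ to get an $S$-word $\sigma_p$, and let $\phi(u)$ be the terminal vertex of the (unique) edge path in $\Lambda$ that starts at $v$ and reads $\sigma_p$; equivalently $\phi(u)=v\cdot q(\sigma_p)$, where $q:F(S)\to G$ is the quotient map. On an edge $e$ of $K$ from $x$ to $y$ labeled $a$, I would take a path to $x$ followed by $e$, so that $\phi(y)$ is the $a$-endpoint of the edge of $\Lambda$ at $\phi(x)$, and set $\phi(e)$ to be that edge. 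By construction $\phi$ respects edge labels and sends $v_0$ to $v$.

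The one step that requires an argument is that $\phi(u)$ does not depend on the choice of $p$. Two edge paths from $v_0$ to $u$ differ by an edge path loop $\ell$ at $v_0$, so it suffices to show every such $\ell$ reads an element of $\ker q=N(R)$. Reading labels gives a homomorphism $\pi_1(K,v_0)\to G$, and $\pi_1$ of a finite connected planar graph is free on the boundary loops of its bounded complementary regions (conjugated to $v_0$ along arcs of a spanning tree of $K$); since each such boundary loop reads an element of $R^{\pm 1}$ by the definition of a van Kampen diagram, it reads $1\in G$, so the homomorphism is trivial. I expect this to be the only real obstacle; alternatively one can avoid quoting the planar $\pi_1$ computation and induct on the number of bounded regions of $K$, stripping off an outermost region at each stage. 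Everything else is bookkeeping.

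Finally, I would remark that if one wishes to carry along the $2$-cells (viewing $K$ as a genuine disk diagram), the same vertex formula extends $\phi$ to a label-respecting map into the Cayley $2$-complex $\Gamma(\mathcal P)$ rather than $\Lambda$: each $2$-cell of $K$ has boundary label in $R^{\pm 1}$, and $\Gamma(\mathcal P)$ has at every vertex a $2$-cell with exactly that boundary word, so one maps the $2$-cell of $K$ to it.
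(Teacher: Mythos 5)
Your proof is correct, but it takes a genuinely different route from the paper's. The paper defines a van Kampen diagram as the result of collapsing $aa^{-1}$ pairs in a wedge of loops labeled $u_1^{-1}r_1u_1,\dots,u_n^{-1}r_nu_n$, and its proof of Lemma \ref{map} simply maps that uncollapsed wedge into $\Lambda$ (trace each word out from $v$) and observes that each collapse corresponds to removing a backtracking pair of edges in the realization; well-definedness is automatic because the map is built before the identifications are made. You instead work from the diagram as a finished object --- a planar labeled graph whose bounded regions read relators --- define the map on vertices via paths from the basepoint, and prove well-definedness by showing the label-reading homomorphism $\pi_1(K^{(1)},v_0)\to G$ is trivial, using that $\pi_1$ of a finite connected planar graph is generated by the (conjugated) boundary loops of its bounded regions. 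Your argument is longer and must confront the well-definedness issue head-on, but it is more robust: it applies to any diagram satisfying the face-labeling condition regardless of how it was produced, and it extends cleanly to the $2$-cells as you note. The paper's argument is essentially free given its particular construction of the diagram, but is tied to that construction. Both are valid proofs of the stated lemma.
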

\begin{proof}
Certainly the uncollapsed wedge of loops of Figure \ref{Fig1} can be mapped into $\Lambda$ respecting edge labels. Collapsing edges corresponds to removing backtracking pairs of edges.
\end{proof}
The image of a van Kampen diagram $\mathcal D$ in $\Lambda$ is called the {\it realization of} $\mathcal D$ in $\Lambda$. 
Notice that the edge path loop of the uncollapsed wedge of loops of Figure \ref{Fig1} traverses each edge exactly once or twice. After collapsing this is still the case for the resulting path, the {\it boundary path}, of the van Kampen diagram. 
Suppose $\mathcal D$ is a van Kampen diagram for a relation $r$ of the presentation $\mathcal S$ of Stallings' group $S$. Notice that the only relations of $\mathcal S$ involving the letter $s$ have the form $[s;e_i]$. If an edge $k$ with label $s$ appears in the boundary of $\mathcal D$ then it is paired with another edge $k'$ in the following way: If the boundary path of $\mathcal D$ (labeled by $r$) crosses $k$ twice (in opposite directions) then $k$ is paired with itself. If $k$ is only crossed once then it belongs to a cell (a square) $C_1$ with labeling $[s;e_i]$. If the $s$-edge of this square opposite $k$ belongs to another square $C_2$, then continue selecting such squares until a square $C_n$ is obtained with opposite $s$-edge in the boundary of $\mathcal D$. This last edge is paired with $k$. The collection of squares $C_1,\ldots, C_n$ form a rectangle $B$ in the plane called a {\it band} for $k$ with {\it top} edge $k$ (labeled $s$) and bottom edge $k'$ (labeled $s$). The two {\it sides} of this band have the same labeling $(t_1,\ldots, t_n)$ where each $t_i\in \{e_1,\ldots, e_q\}^{\pm 1}$.  Notice that two bands cannot cross one another and a band cannot cross itself (since no side edge of a band is labeled $s$). See Figure \ref{Fig2}. The image of a band in $\Lambda$ is called the {\it realization} of the band.
\begin{figure}
\vbox to 3in{\vspace {-2in} \hspace {-.7in}
\hspace{-.5 in}
\includegraphics[scale=1]{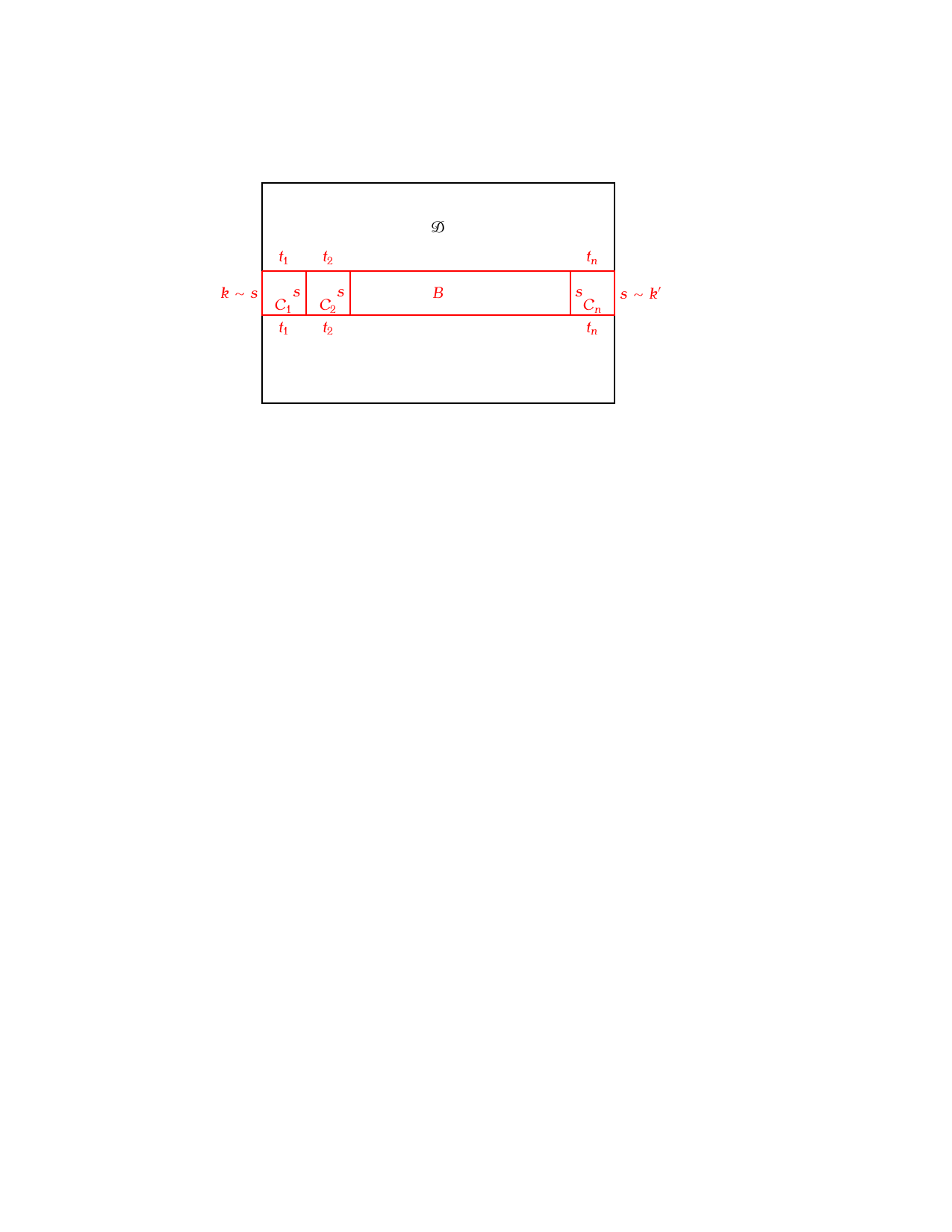}
\vss }
\vspace{-1.5in}
\caption{Bands in van Kampen Diagrams} 
\label{Fig2}
\vspace{-.1in}
\end{figure}

\section{The Lemmas}\label{L}
Our goal is to prove that $X$ is simply connected at infinity. All homotopies between two paths in $X$ with the same end points will be homotopies relative to those end points. 
\begin{lemma}\label{NFP}
The group $K$ is normal in $F(a,b)\times F(c,d)$ and hence it consists of all elements of $F(a,b)\times F(c,d)$ with zero exponent sum and it is the kernel of the map of $F(a,b)\times F(c,d)\to \mathbb Z$ where all generators are mapped to the generator $1$ of $\mathbb Z$.
\end{lemma}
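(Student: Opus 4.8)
The plan is to verify that $K$ is stable under conjugation by the generators of $G:=F(a,b)\times F(c,d)$, whence $K$ is normal; the remaining assertions then fall out of an inspection of the quotient $G/K$.

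For normality it suffices to show that $x^{\pm1}e_ix^{\mp1}\in K$ for every $x\in\{a,b,c,d\}$ and every $i$, since conjugation then propagates through an arbitrary word in the generators. Here one may first observe that $e_4=e_2e_1^{-1}$, $e_5=e_3e_1^{-1}$ and $e_6=e_3e_2^{-1}$, so that $K=\langle e_1,e_2,e_3\rangle$ and only the three elements $e_1,e_2,e_3$ need be conjugated. Many of these conjugations are trivial because $\{a,b\}$ commutes with $\{c,d\}$: for instance $a$ commutes with $c$ and $d$, so $a^{\pm1}e_2a^{\mp1}=e_2$ and $a^{\pm1}e_3a^{\mp1}=e_3$; and $c$ (resp. $d$) commutes with $a$ and $b$, so $c^{\pm1}e_1c^{\mp1}=e_1$ (resp. $d^{\pm1}e_1d^{\mp1}=e_1$) and $c^{\pm1}e_2c^{\mp1}=e_2$, $d^{\pm1}e_3d^{\mp1}=e_3$.

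The genuinely non-trivial cases are conjugating $e_1$ by $a^{\pm1}$ or $b^{\pm1}$, conjugating $e_2$ by $b^{\pm1}$ or $d^{\pm1}$, and conjugating $e_3$ by $b^{\pm1}$ or $c^{\pm1}$, and all of these I would handle by routing through the complementary free factor: a conjugating letter from one factor can be traded for a pair of $e_i$'s by inserting a letter of the other factor that commutes past the relevant terms. Concretely,
$$a^{-1}e_1a=a^{-1}b=e_2e_1e_2^{-1},\qquad ae_1a^{-1}=e_2^{-1}e_1e_2,\qquad d^{-1}e_2d=e_3^{-1}e_2e_3,$$
while $b^{-1}e_1b=e_2e_1e_2^{-1}$, $b^{-1}e_2b=e_4\,e_2e_1e_2^{-1}$ (using $e_2=e_4e_1$), and the cases of $e_3$ are the mirror images of those for $e_2$ under $c\leftrightarrow d$; the conjugations by $a$, $b^{-1}$, etc. of the remaining sign are identical. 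Each such identity is a one-line cancellation in $F(a,b)\times F(c,d)$. This establishes $K\trianglelefteq G$.

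Finally, every $e_i$ has exponent sum $0$, and the homomorphism $\phi\colon G\to\mathbb Z$ sending every generator to $1$ is exactly the exponent-sum map, so $K\subseteq N:=\ker\phi$, i.e. $K$ consists of zero-exponent-sum elements. In $G/K$ the relations $ba^{-1},ca^{-1},da^{-1}\in K$ give $\bar a=\bar b=\bar c=\bar d$, so $G/K=\langle\bar a\rangle$ is cyclic; since $\phi$ factors through $G/K$ to a surjection onto $\mathbb Z$ carrying $\bar a$ to $1$, the element $\bar a$ must have infinite order and the induced map $G/K\to\mathbb Z$ is an isomorphism, forcing $K=\ker(G\to G/K)=N$. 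The only point requiring care is keeping the commutation bookkeeping straight — $a,b$ commute with $c,d$ but not with one another — but since each conjugation reduces to a single cancellation, I anticipate no real obstacle.
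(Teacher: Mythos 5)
Your proposal is correct and follows essentially the same route as the paper: both establish normality by conjugating a generating set of $K$ by single letters from $\{a,b,c,d\}^{\pm 1}$ and rewriting the results as words in the $e_i$ (your reduction to $e_1,e_2,e_3$ and systematic use of the factor-commutations just streamlines the case list, and your identities check out). Your explicit quotient argument for $K=\ker\phi$ — that $G/K$ is cyclic on $\bar a$ and surjects onto $\mathbb Z$ — is a welcome spelling-out of the "hence" that the paper leaves implicit.
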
 

\begin{proof}
The group $K$ is generated by the six element set
$$T=\{ba^{-1}, ca^{-1}, da^{-1}, cb^{-1}, db^{-1}, dc^{-1}\}.$$
First we show all $0$-exponent sum 2-letter words in $\{a,b,c,d\}^{\pm 1}$ are in $K$.  Since $a$ and $c$ commute, all $0$-exponent sum 2-letter words in $\{a,c\}^{\pm 1}$ are in $K$. Similarly for $\{a,d\}^{\pm 1}$, $\{b, c\}^{\pm 1}$, and $\{b, d\}^{\pm 1}$. Next we consider the four $0$-exponent sum 2-letter words in $\{a,b\}^{\pm 1}$. Certainly $ba^{-1}\in K$ and so $ab^{-1}=(ba^{-1})^{-1} \in K$.   Observe that $b^{-1}a=(b^{-1}c)(c^{-1}a)\in K$, and $(b^{-1}a)^{-1}=a^{-1}b\in K$. Similarly for $\{c,d\}^{\pm 1}$, and all $0$-exponent sum 2-letter words in $\{a,b,c,d\}^{\pm 1}$ are in $K$.  
In order to see that $K$ is normal in $F(a,b)\times F(c,d)$ it is enough to show that each conjugate of an element of $T$ by an element of $\{a,b,c,d\}^{\pm 1}$ is in $K$. Consider all such conjugates of $ba^{-1}$. 
Note that $a(ba^{-1})a^{-1}=aba^{-2}=(ac^{-1})(bc^{-1})(ca^{-1})(ca^{-1})(\in K)$, $a^{-1}(ba^{-1})a=a^{-1}b$, $b^{-1}(ba^{-1})b=a^{-1}b$, $b(ba^{-1})b^{-1}=b^2a^{-1}b^{-1}=(bc^{-1})(bc^{-1})(ca^{-1})(cb^{-1})\in K$. Next observe that $a(ca^{-1})a^{-1}=ca^{-1}\in K$, $b(ac^{-1})b^{-1}=(bc^{-1})(ab^{-1})\in K$. Similarly all single letter conjugates of the other elements of $T$ are in $K$.   
\end{proof}

\begin{lemma} \label{OneE} 
The group $K$ is 1-ended.
\end{lemma}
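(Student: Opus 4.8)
The plan is to show that $K$, which consists of all words in $\{a,b,c,d\}^{\pm 1}$ with zero exponent sum, is $1$-ended by producing, for every finite subgraph of the Cayley graph $\Gamma(K,E)$, a single complementary component containing ``most'' of $K$ and showing any two vertices outside a large ball can be joined by a path avoiding a prescribed finite set. A clean way to organize this is to fix an explicit generating set and use the $F_2$-structure of $K$ directly. The key structural fact I would invoke is that $K$ is the kernel of $f\colon F(a,b)\times F(c,d)\to\mathbb Z$, so $F(a,b)\times F(c,d)\cong K\rtimes\mathbb Z$ with the $\mathbb Z$ generated by, say, $a$; conjugation by $a$ is an automorphism $\phi$ of $K$. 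Since $K$ is finitely generated (Lemma \ref{NFP}) and infinite, $1$-endedness amounts to ruling out both $0$ ends (impossible, $K$ infinite) and $2$ or infinitely many ends.

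\medskip

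\noindent\textbf{Approach via Stallings/structure of splittings.} The cleanest ``elementary algebraic'' route, matching the paper's phrasing, is: a finitely generated group has more than one end iff it splits nontrivially over a finite subgroup (Stallings). Since $K$ is torsion-free (it is a subgroup of $F_2\times F_2$), a splitting over a finite subgroup is a splitting over the trivial group, i.e. $K$ is either a nontrivial free product or infinite cyclic. It is not infinite cyclic: it contains, e.g., the two elements $ba^{-1}$ and $ca^{-1}$, which do not commute (their images in $F(a,b)\times F(c,d)$: the $F(a,b)$-coordinates are $ba^{-1}$ and $a^{-1}$, which generate a nonabelian free group). So it suffices to show $K$ is not a nontrivial free product $A\ast B$. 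For this I would use that $K$ is normal in $G:=F(a,b)\times F(c,d)$ with $G/K\cong\mathbb Z$, together with the fact that $G$ contains $\mathbb Z^2$ (e.g. $\langle a\rangle\times\langle c\rangle$): any $\mathbb Z^2$ in $G$ meets $K$ in a subgroup of rank $\geq 1$ that is normalized by a $\mathbb Z$ acting on it, forcing a $\mathbb Z^2$ inside $K$ (concretely $\langle ba^{-1},\, a^{-1}ca\cdot c^{-1}a\cdot\ldots\rangle$ — more simply $\langle ac^{-1}\rangle$ lies in $K$ and commutes with $\langle bd^{-1}\rangle$, and these are independent since their $F(a,b)$-parts $a$ and $b$ are independent). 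A group with a $\mathbb Z^2$ subgroup which is also a nontrivial free product would have that $\mathbb Z^2$ conjugate into a free factor (Kurosh), and iterating the $\mathbb Z^2$ would have to sit in a single factor; but then the normality of $K$ under $\phi$ and a translation-length / ping-pong argument on the Bass–Serre tree of the free product yields a contradiction, since $\phi$ fixes no nontrivial free factor decomposition (the commutator subgroup of $K$, or its abelianization, is not respected).

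\medskip

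\noindent\textbf{An alternative, more hands-on route} is to exhibit $\Gamma(K,E)$ as quasi-isometric to a ``two-dimensional'' object. Using the retraction $f$, level sets can be organized: I would first build a convenient generating set for $K$ (as the paper promises to do) consisting of elements supported near the ``diagonal,'' then verify that the subgraph of $\Gamma(K)$ spanned by a ball of radius $n$ disconnects $\Gamma(K)$ into exactly one unbounded piece by an explicit detour argument — given two long words $u,v\in K$ of zero exponent sum, one pushes both through a common long ``corridor'' (alternately appending $ba^{-1}$ and $ab^{-1}$, say, to move far out while staying in $K$) and then slides across using the commutation relations in $F_2\times F_2$ restricted to $K$. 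Each such elementary move changes the word by a bounded amount in $\Gamma(K,E)$, so the path stays outside any prescribed finite set once $n$ is large.

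\medskip

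\noindent\textbf{Main obstacle.} The subtle point — and the one I expect to consume most of the work — is ruling out infinitely many ends, i.e. the free-product case, \emph{elementarily}, without importing heavy machinery: the honest difficulty is that $K$ is finitely generated but \emph{not} finitely presented, so one cannot argue via accessibility or a finite $K(K,1)$ skeleton. I would therefore lean on the $\mathbb Z^2$-subgroup obstruction to free products together with normality in $G$, making the ping-pong argument on the Bass–Serre tree fully explicit; getting that argument to be genuinely self-contained (rather than quoting Stallings' theorem on ends) is the crux. If one is willing to quote Stallings' ends theorem, the proof is short; if not, a direct Cayley-graph connectivity argument as in the alternative route is needed, and there the bookkeeping of which elementary moves keep a word in $K$ while escaping every compact set is the part requiring care.
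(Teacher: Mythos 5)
Your first route follows the paper's strategy in outline---Stallings' ends theorem plus torsion-freeness of $K\le F(a,b)\times F(c,d)$ reduces the problem to excluding $K\cong\mathbb Z$ (easy) and $K\cong A\ast B$ nontrivial, with the free-product case to be killed by a $\mathbb Z^2$ subgroup---but the decisive step, actually deriving a contradiction from $K=A\ast B$, is precisely the step you leave as a sketch, and the sketch does not close. Two concrete problems. First, your explicit $\mathbb Z^2$ is wrong: $ac^{-1}$ and $bd^{-1}$ do \emph{not} commute in $F(a,b)\times F(c,d)$, since their $F(a,b)$-coordinates are $a$ and $b$. The correct choice (the one the paper uses) is $\langle ba^{-1},dc^{-1}\rangle$, whose two generators lie in the two separate direct factors. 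Second, and more seriously, ``normality of $K$ under $\phi$ and a translation-length/ping-pong argument on the Bass--Serre tree yields a contradiction, since $\phi$ fixes no nontrivial free factor decomposition'' is not an argument: the parenthetical justification (``the commutator subgroup of $K$, or its abelianization, is not respected'') does not establish anything, and even granting that $\phi$ fails to preserve the free factors, no contradiction follows---free products admit automorphisms that scramble their factors. As you yourself acknowledge, this is the crux, and it is missing; the second, ``hands-on'' route is likewise only a plan with no specified moves.

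The idea you are missing, which is what makes the paper's proof work, is to exploit conjugation by the \emph{ambient} group rather than by an abstract automorphism $\phi$. For any $g\in F(a,b)\times F(c,d)$, the conjugate $v=g(ba^{-1})g^{-1}$ equals the conjugate by the $F(a,b)$-coordinate of $g$, so $v\in K\cap F(a,b)$ and $v$ still commutes with $dc^{-1}\in F(c,d)$. Hence $\langle v,ba^{-1},dc^{-1}\rangle$ is a direct product of two infinite groups, so one-ended, so it lies in a single conjugate of a free factor; since it meets $\langle ba^{-1},dc^{-1}\rangle\subset B$ nontrivially and two conjugates of a free factor meeting nontrivially coincide, every such $v$ lies in $B$. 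Thus the $K$-normal closure of $ba^{-1}$ lies in $B$, which is absurd because $xba^{-1}x^{-1}$ has syllable length $3$ for any nontrivial $x\in A$. Without this (or an equally substantive replacement), your proof does not rule out infinitely many ends.
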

\begin{proof}
Recall that the set $T=\{ba^{-1}, ca^{-1}, da^{-1}, cb^{-1}, db^{-1}, dc^{-1}\}$ generates $K$, 
Since $(cb^{-1})(ba^{-1})=ca^{-1}$ and $(dc^{-1})^{-1}(db^{-1})=cb^{-1}$, the group $K$ is generated by $\{ba^{-1}, da^{-1}, db^{-1},  dc^{-1}\}$. Finally, $(da^{-1})(ba^{-1})^{-1}=db^{-1}$ so that the commuting elements $ba^{-1} , dc^{-1}$ along with the element $da^{-1}$ generate $K$. 
$$K=\langle ba^{-1}, dc^{-1}, da^{-1}\rangle$$
Since $F(a,b)\times F(c,d)$ is torsion free, $K$ is torsion free. If $K$ has more than one end, then $K=A\ast B$ where $A$ and $B$ are infinite finitely generated groups. The subgroup $\langle ba^{-1}, dc^{-1}\rangle$ of $K$ is isomorphic to $\mathbb Z\times \mathbb Z$. One ended subgroups of $K$ lie in a conjugate of $A$ or $B$. Say $\langle ba^{-1}, dc^{-1}\rangle$ lies in a conjugate of $B$.  Say $\langle ba^{-1}, dc^{-1}\rangle \subset u^{-1}Bu$. As $K=u^{-1}Ku=(u^{-1}Au)\ast (u^{-1}Bu)$, we may assume that $\langle ba^{-1}, dc^{-1}\rangle\subset B$. 
Note that any conjugate of $ba^{-1}$ by an element of $F(a,b)\times F(c,d)$ is an element of $K\cap F(a,b)$ and hence it commutes with $dc^{-1}$. In particular, if $v$ is a conjugate of $ba^{-1}$ then $\langle v, ba^{-1}, dc^{-1}\rangle$ is a 1-ended subgroup of $K$ and so it lies in a $K$-conjugate of $B$. If two conjugates of $B$ in $A\ast B$ intersect non-trivially, then they are the same (Corollary 4.1.5 of \cite{MKS04}). Hence all conjugates of $ba^{-1}$ by elements of $F(a,b)\times F(c,d)$ lie in $B$. Similarly for $dc^{-1}$. This means that the normal closure of $\{ba^{-1}, dc^{-1}\}$ in $K$ is a subset of $B$. But if $x$ is a non-trivial element of $A$ then $xba^{-1}x^{-1}\not \in B$ (since it has syllable length 3 in the free product $A\ast B$). Instead, $K$ is 1-ended. 
\end{proof}
The next lemma reduces our problem to considering $\Gamma_1$-loops in $X$. 
\begin{lemma}\label{Reduce} 
If $F(a,b)\times F(c,d)$ is simply connected at infinity in $S$, then $X$ is simply connected at infinity.
\end{lemma}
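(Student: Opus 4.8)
The plan is to exploit the realization of $S$ as an HNN extension of $A:=F(a,b)\times F(c,d)$ over $K$ with stable letter $s$, which turns $X=\Gamma(\mathcal S)$ (one-ended, as recalled in the introduction) into a tree of spaces: $X$ is assembled from one copy $g\Gamma(\mathcal Q_2)$ of $\Gamma(\mathcal Q_2)$ for each coset $gA$ --- the \emph{sheets} of $X$ --- glued along the $s$-edges and the squares $[s;e_i]$, with the combinatorics of the gluings recorded by the Bass--Serre tree $T$ of the splitting; let $\pi\colon X\to T$ be the collapse taking each sheet to a vertex and each $s$-edge to an edge. Three facts will carry the argument. First, $X$ is $S$-homogeneous, so the hypothesis that $A$ is simply connected at $\infty$ in $S$ can be applied in every sheet after translating. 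Second, $X$ is locally finite, so only finitely many sheets, and only finitely many translates of $\Gamma(K)$, meet any given ball about $\ast$. Third, a sheet, or a translate of $\Gamma(K)$, that is disjoint from a neighbourhood of a compact $C$ is itself simply connected and disjoint from $C$, so any loop lying in it is filled there automatically.

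Given a compact $C$, I would choose $D$ as follows. Using the assignment $C_0\mapsto D_1(C_0)$ furnished by the hypothesis for loops in $\Gamma_2$ (edge-loops in $\Gamma_2-D_1(C_0)$ are null-homotopic in $X-C_0$), let $C^{+}$ be a ball about $\ast$ containing $C$ together with the finitely many compacta $g\,D_1(g^{-1}C)$, $gA$ ranging over the sheets that meet a small neighbourhood of $C$. Then, applying one-endedness of $K$ (Lemma~\ref{OneE}) to each of the finitely many translates of $\Gamma(K)$ meeting a neighbourhood of $C^{+}$, choose $D\supset C^{+}$ so large that any two vertices of any translate of $\Gamma(K)$ lying outside $D$ are joined inside that translate by an edge-path missing $C^{+}$. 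Now take an edge-loop $\gamma\subset X-D$. Since $T$ is a tree, the closed edge-path $\pi(\gamma)$ backtracks, so $\gamma$ has a subpath $f_1\cdot\sigma\cdot f_2$ in which $f_1,f_2$ are $s$-edges with $\pi(f_1)=\pi(f_2)$ equal to a single edge $\varepsilon$ of $T$ traversed in opposite directions, and $\sigma$ is a path with no $s$-edges lying in one sheet $V=gA$; moreover the endpoints $v_1,v_2$ of $\sigma$ lie in the translate of $\Gamma(K)$ inside $V$ along which the $s$-edges over $\varepsilon$ attach. Choose an edge-path $\sigma'$ from $v_1$ to $v_2$ inside that translate of $\Gamma(K)$ and missing $C^{+}$ (possible by the choice of $D$). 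Then $\sigma\cdot(\sigma')^{-1}$ is an edge-loop in the sheet $g\Gamma_2$ that misses $g\,D_1(g^{-1}C)$ --- or else the whole sheet $V$ is disjoint from a neighbourhood of $C$, in which case $V$ fills the loop on its own --- so it is null-homotopic in $X-C$ by the hypothesis translated by $g$; hence $\gamma$ may be homotoped in $X-C$ so as to replace $\sigma$ by $\sigma'$. Finally, since $s$ commutes with each $e_i$, the squares $[s;e_i]$ strung along $\sigma'$ form a band whose realization lies in the $1$-neighbourhood of $\sigma'$, hence in $X-C$; pushing $\gamma$ across this band deletes $f_1$ and $f_2$, leaving a loop with two fewer $s$-edges that still misses $D$ by the same routing. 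Iterating, $\gamma$ is homotoped in $X-C$ to an edge-loop lying in a single sheet and missing $D$, which the hypothesis --- or, when that sheet is far out, the sheet itself --- kills in $X-C$. Thus $X$ is simply connected at $\infty$.

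The step I expect to be the real obstacle is precisely this uniform choice of $D$: one must fix a single compact, before $\gamma$ is seen, that forces every in-sheet filling, every routing inside a translate of $\Gamma(K)$, and every band in the reduction to stay in $X-C$, even though $\gamma$ can be long and can wind through arbitrarily far-flung sheets. What rescues the bookkeeping is that only finitely many sheets and only finitely many translates of $\Gamma(K)$ come near a fixed compactum, while the far ones are simply connected and already clear of $C$; making these finiteness and one-endedness inputs precise, and running the induction on the number of $s$-edges of $\gamma$ so that ``$\gamma$ misses $D$'' is preserved at each step, is where the care is needed. The band calculus of Section~\ref{vKD} and the backtracking in $T$ are then routine.
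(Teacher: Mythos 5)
Your architecture is in essence the paper's own: pair up the $s$-edges of the loop (you via backtracking of $\pi(\gamma)$ in the Bass--Serre tree, the paper via $s$-bands in a van Kampen diagram --- these produce the same non-crossing pairing), reroute the $K$-segment joining a paired pair through the relevant translate of $\Gamma(K)$ using one-endedness of $K$, push across the band of $[s;e_i]$-squares to cancel the pair, and fill the residual single-sheet loops using the hypothesis. The gap is exactly where you flag it, and the sentence you use to bridge it is false: after pushing across the band, the new segment $\sigma''$ (the far side of the band, i.e.\ the $s^{\pm1}$-translate of $\sigma'$) is only guaranteed to lie within distance $1$ of a path missing $C^{+}$; it need not miss $D$, since $D$ is chosen strictly larger than $C^{+}$ (it must also contain the one-endedness compacta attached to the translates of $\Gamma(K)$). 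So ``leaving a loop \dots that still misses $D$'' does not hold, and the inductive hypothesis ``$\gamma\subset X-D$'' is not preserved. If you try to repair this by demanding that each reroute avoid a neighbourhood of $D$ rather than of $C^{+}$, you hit a circularity: the compacta certifying reroutability are defined in terms of the set to be avoided, which would then have to contain those compacta themselves.

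The repair --- and this is the content of the paper's argument --- is the observation that the $s$-edges never move: every $s$-edge surviving a push is an edge of the \emph{original} loop, so the two attaching vertices of every band are original vertices of $\gamma$ and hence lie outside $D$ throughout the entire process. Consequently every reroute $\sigma'$ can be chosen, uniformly and with no degradation, to avoid a $1$-neighbourhood of $C^{+}$; every far side $\sigma''$ then avoids $C^{+}$, which contains $N(C)$ and all the in-sheet filling compacta $gD_1(g^{-1}C)$; and every residual single-sheet loop --- bounded by original arcs of $\gamma$ (missing $D$) and far sides of bands (missing $C^{+}$) --- misses its sheet's filling compactum and dies in $X-C$. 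The paper makes the uniformity manifest by replacing all bands simultaneously, so that the complementary regions are single-sheet loops bounded only by original arcs and band sides. Your one-pair-at-a-time induction can be salvaged, but only after you separate the two roles of $D$ (certifying that original vertices are reroutable) and $C^{+}$ (what rerouted and pushed paths must avoid) and note that only the former needs to survive the induction; as written, the proof is incomplete at its central step.
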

\begin{proof}
Assume $F(a,b)\times F(c,d)$ is simply connected at infinity in $S$. 
Let $C$ be a finite subcomplex of $X$. For $k\in S$, there are only finitely many $k\Gamma(\mathcal Q_2)$ that intersect $C$. Choose $k_1,\ldots, k_n$ such that if $k\Gamma(\mathcal Q_2)\cap C\ne\emptyset$, then $k\Gamma(\mathcal Q_2)\in \{k_1\Gamma(\mathcal Q_2),\ldots, k_n\Gamma(\mathcal Q_2)\}$. Since $F(a,b)\times F(c,d)$ is simply connected at infinity in $S$ there is a finite complex $D_i\subset X$ such that if $\beta$ is an edge path loop in $k_i\Gamma(\mathcal Q_2)-D_i$, then $\beta$ is homotopically trivial in $X-C$. Let $D=C\cup (\cup_{i=1}^nD_i$). If $\beta$ is an edge path loop in $g\Gamma(\mathcal Q_2)$ for $g\not\in \{k_1,\ldots, k_n\}$ then since $\Gamma(\mathcal Q_2)$ is simply connected, $\beta$ is homotopically trivial in $g\Gamma(\mathcal Q_2)$ and so by a homotopy avoiding $C$. We have:

\medskip
 
\noindent ($\ast$) If $\beta$ is an edge path loop in $k\Gamma(\mathcal Q_2)-D$ for any $k\in S$, then $\beta$ is homotopically trivial in $X-C$. 

\medskip

For $g\in S$, there are only finitely many $g\Gamma(K)$ in $k_i\Gamma(\mathcal Q_2)$ that intersect $D_i$. List them as $g_j\Gamma(K)$ for $j\in J_i$. The group $K$ is 1-ended and so, for each $j\in J_i$, there is a finite subcomplex $A_{g_j}$ of $g_j\Gamma(K)$ such that any two vertices in $g_j\Gamma(K)-A_{g_j}$ can be joined by an edge path in $g_j\Gamma(K)-N(D_i)$. Let $A _i =N(D_i)\cup (\cup_{j\in J_i} A_{g_j})$. 
Let $A=\cup_{i=1}^n A_i$. Suppose that $\alpha$ is an edge path loop in $X-A$ our goal is to show that $\alpha$ is homotopically trivial in $X-C$. 
Let $\bar \alpha$ be the $S$-word labeling the edges of $\alpha$.
 By eliminating backtracking in $\alpha$, we may assume that $\bar \alpha$ is a reduced $S$-word that labels $\alpha$. Choose a van Kampen diagram $\mathcal D$ for $\bar \alpha$ (arising from the presentation $\mathcal S$). If an $s$-letter of the boundary of $\mathcal D$ is paired with itself, there is nothing to do. Otherwise choose a band for that $s$-letter. Each side of the band is labeled by a word  in $\{e_1,\ldots, e_q\}^{\pm 1}$ (defining an element of $K$) and both sides have the same labeling (see Figure \ref{Fig2}). If $\alpha$ has no $s$-edge we are finished by ($\ast$). Suppose $v$ is the initial vertex of an $s$-edge $e$ of $\alpha$ where $e$ is paired with another edge. Let $B$ be the realization of a band for $e$ in $X$. Let $w$ be the initial vertex of the corresponding $s$-edge $\bar e$ (at the end of $B$). Let $\gamma$ be the $\{e_1,\ldots e_q\}^{\pm 1}$ edge path bounding $B$ from $v$ to $w$. 

By the selection of $A$ (and the 1-endedness of $K$) we may choose an $\{e_1,\ldots, e_q\}^{\pm 1}$ edge path $\delta_1$  from $v$ to $w$ that avoids $N(D)$. 
Let $\bar B$ be the realization of a band in $X$ for $e$ and $\bar e$ with one side equal to $\delta_1$ and the other side (with the same labeling as $\delta_1$) equal to $\delta _2$. Since $\delta_1$ avoids $N(D)$, this band avoids $D$. In particular $\delta_2$ avoids $D$ and $\delta_1$ is homotopic  
to $(e,\delta_2, \bar e^{-1}$) by a homotopy $H_{(e,\bar e)} $(via $\bar B$) in $X-C$. 

\begin{figure}
\vbox to 3in{\vspace {-2in} \hspace {.5in}
\hspace{-2.5 in}
\includegraphics[scale=1]{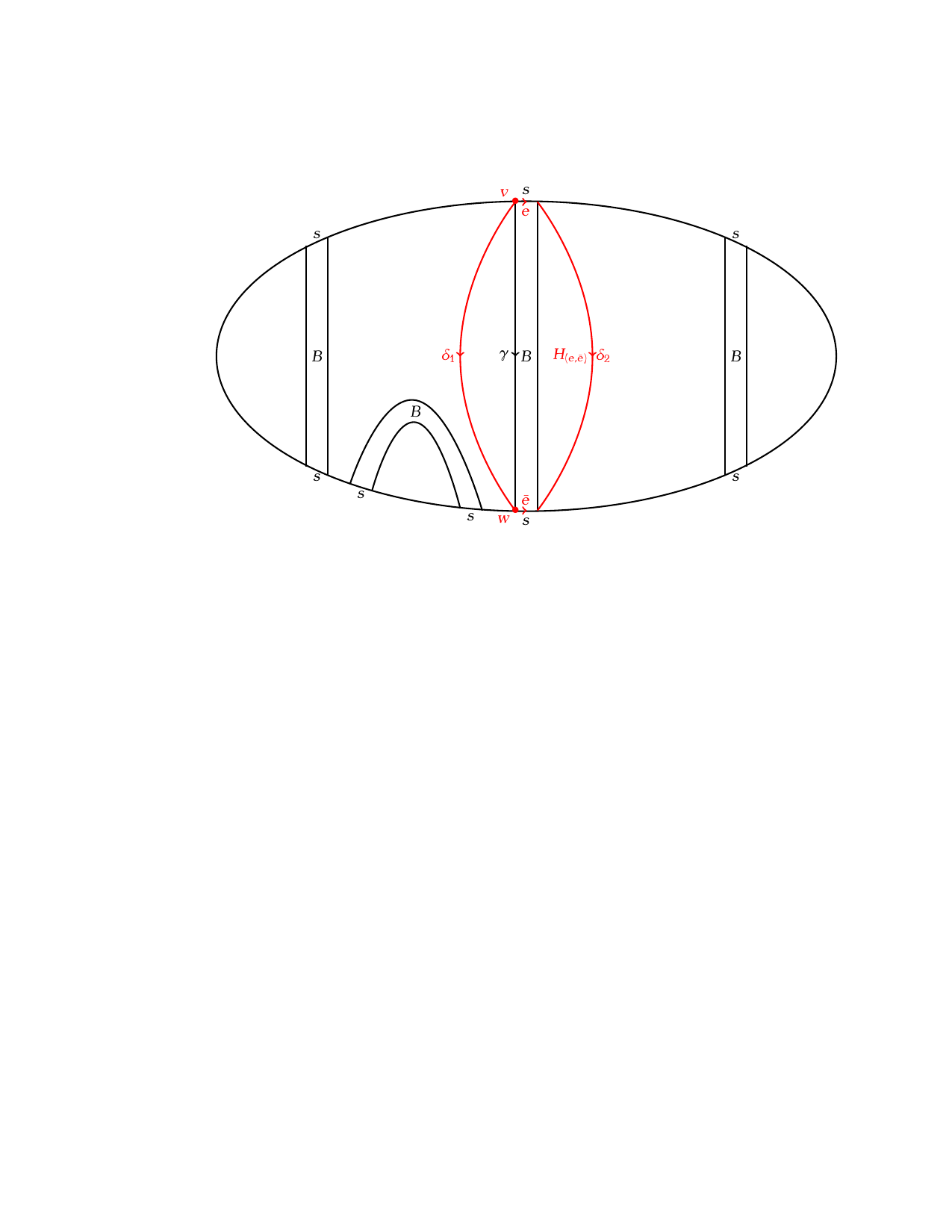}
\vss }
\vspace{-.4in}
\caption{Eliminating Bands} 
\label{FigR}
\vspace{.3in}
\end{figure}

Recall that $s$-bands cannot cross themselves or one another. Hence when we produce such homotopies for each such pair of $s$-edges, we can complete a null homotopy for $\alpha$ in $X-C$ by finding homotopies for the loops of Figure \ref{FigR} comprised of segements of $\alpha$ with no $s$-edges and $\delta_i$ type segments. Each of these loops belongs to $k\Gamma(\mathcal Q_2)-D$ for some $k\in S$. By ($\ast$) these loops are homotopically trivial in $X-C$. Combining these homotopies with the $H_{(e,\bar e)}$ homotopies (see Figure \ref{FigR}) creates a null homotopy for $\alpha$ in $X-C$. 
\end{proof}

Suppose $v$ is a vertex of $\Gamma_1$ (the Cayley 2-complex of the group $F(a,b)\times F(c,d)$ with respect to $\{a,b,c,d\}$) then there is a unique geodesic edge path $(\alpha, \beta)$ in $\Gamma_1$ from the identity vertex $\ast$ to $v$ where the labeling of $\alpha$ and $\beta$ are geodesics in $F(a,b)$ and $F(c,d)$ respectively. Call this path {\it the $F(a,b)-F(c,d)$ geodesic}.  Note that the (unique) edge path $(\beta', \alpha')$ at $\ast$ where $\beta'$ and $\alpha'$ have the same labeling as $\beta$ and $\alpha$ respectively also defines a geodesic in $\Gamma_1$ to $v$. Call this path {\it the $F(c,d)-F(a,b)$ geodesic}. For $v\in \Gamma_1$ and integer $n\geq 0$, let $N(v,n)$ be the ball of radius $n$ about $v$ in $\Gamma_1$ so $N(v,n)$ is the full subgraph of $\Gamma$ with vertices the set of all vertices $w$ such that there is a geodesic edge path of length $\leq n$ from $v$ to $w$. Call an edge path in $\Gamma_1$ with all edge labels $t\in Q\subset \{a,b,c,d\}^{\pm 1}$ a $Q$-path. Call an even length path in $\Gamma_1$ with alternating $\pm 1$ exponents a {\it $K$-path}. Suppose $\tau$ is an $\{a,b\}^{\pm 1}$ path at the vertex $v\in\Gamma_1$ and $\gamma$ is a $\{c,d\}^{\pm 1}$ path at $v$. Let $\gamma'$ ($\tau'$)  be the path at the end point of $\tau$ ($\gamma$) with the same labeling as $ \gamma$ ($\tau)$. There is a {\it product homotopy} between $(\tau, \gamma)$ and $(\gamma',\tau')$ (built from the commutation 2-cells of the relations $[a,b:c,d]$) in $v\Gamma(\mathcal Q_1)\subset X$. Each part of Lemma \ref {Ptrade} is clear. 

\noindent {\bf Note.} The exponents of the letters in $\tau$ are opposite to those of $\gamma$ in part (3), determining the $K$-path $\psi$.

\begin{lemma} \label{Ptrade} 
Suppose $v$ is a vertex of $\Gamma_1$ and $(\alpha,\beta)$ is the $F(a,b)-F(c,d)$ geodesic from $\ast$ to $v$. 

\noindent (1) If $(\hat\alpha,\hat\beta)$ is the $F(a,b)-F(c,d)$ geodesic to a point of $vF(a,b)$ then $\hat \beta$ has the same labeling as $\beta$. The $F(c,d)-F(a,b)$ geodesic to any point of $vF(c,d)$ has the form $(\alpha, \bar\beta)$. 

\medskip

\noindent (2) If the last edge label of $\alpha$ is $a$ and $\gamma$ is a geodesic $t$-path at $v$ for $t\in \{a,b,b^{-1}\}$ then $(\alpha, \beta,\gamma)$ is geodesic so that $((\alpha,\gamma'),\beta')$ is the $F(a,b)-F(c,d)$-geodesic from $\ast$ to the end point of $\gamma$, where $\gamma'$ (respectively $\beta'$) has the same labeling as does $\gamma$ (respectively $\beta$). In particular, if $v\not \in N(\ast, n)$ then no point of $\gamma$ is in $N(\ast, n)$ (in fact, $\gamma$ moves geodesically away from $\ast$).  Similarly if the last letter of $\alpha$ is $a^{-1}$, $b$ or $b^{-1}$ or the last letter of $\beta$ is $c$, $c^{-1}$, $d$ or $d^{-1}$. 
In particular, for any vertex $v$ in $\Gamma_1$ either every $a$-path at $v$  or every $b$-path at $v$ moves geodesically away from $\ast$. Similarly for $c$ and $d$ paths, $a^{-1}$ and $b^{-1}$-paths, and $c^{-1}$ and $d^{-1}$ paths at $v$.  

\medskip

\noindent (3) Suppose $\tau$ is an $\{a,b\}$-path of length $k$ in $\Gamma_1-N(\ast, m)$ from $v$ to $w$. Say the last letter of $\beta$ is $c$ or $c^{-1}$ (respectively $d$ or $d^{-1}$). If $\gamma$ is a $d^{-1}$ (respectively $c^{-1}$) path at $w$ (see the above Note), then $(\tau, \gamma)$ is homotopic to $(\gamma',\tau')$ (a path at $v$ where $\tau'$ and $\gamma'$ have the same labeling as $\tau$ and $\gamma$ respectively)  by a (product) homotopy in $\Gamma(\mathcal Q_1)-N(\ast,n)$ (statement (2) implies that each vertex of the product homotopy is in $\Gamma(\mathcal Q_1)-N(\ast,n)$). Furthermore, if $\gamma$ has length equal to the length of $\tau$ then the $K$-path $\psi$ at $v$ obtained by alternating letters of $\tau$ and $\gamma$ is homotopic to $(\tau,\gamma)$ by a homotopy with image in $\Gamma(\mathcal Q_1)-N(\ast,n)$. (Figure \ref{FigP}).

Similarly, if $\tau$ is an $\{a^{-1},b^{-1}\}$, $\{c,d\}$ or $\{c^{-1},d^{-1}\}$-path then there is a corresponding $(c)$ or $(d)$, $(a^{-1})$ or $(b^{-1})$, $(a)$ or $(b)$-path $\gamma$ respectively. $\square$
\end{lemma}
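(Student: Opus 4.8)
\noindent The plan is to reduce all three parts to the single observation that $\Gamma_1$ is a product of two trees. Writing a vertex of $\Gamma_1$ as a pair $(g,h)\in F(a,b)\times F(c,d)$, the graph $\Gamma_1$ is the $1$-skeleton of the product of the Cayley tree $T_{ab}$ of $F(a,b)$ (with basis $\{a,b\}$) and the Cayley tree $T_{cd}$ of $F(c,d)$ (with basis $\{c,d\}$), and its edge-path metric is the sum of the two tree metrics,
$$d\bigl(\ast,(g,h)\bigr)=|g|_{\{a,b\}}+|h|_{\{c,d\}},$$
where $|\cdot|$ denotes reduced word length. Consequently an edge path from $\ast$ to $(g,h)$ is geodesic exactly when it is a shuffle of the reduced $\{a,b\}$-word $w_g$ representing $g$ with the reduced $\{c,d\}$-word $w_h$ representing $h$; these words are unique since $T_{ab}$ and $T_{cd}$ are trees, and in particular $\alpha$ has labeling $w_g$ and $\beta$ has labeling $w_h$. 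I would state this product-of-trees picture first and then read off each part.

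Part (1) is immediate: every vertex of $vF(a,b)$ has the same $\{c,d\}$-coordinate $h$ as $v$, so the $F(c,d)$-segment $\hat\beta$ of any $F(a,b)-F(c,d)$ geodesic reaching it must be a copy of $w_h$, i.e.\ has the same labeling as $\beta$; symmetrically every vertex of $vF(c,d)$ has $\{a,b\}$-coordinate $g$, so the $F(a,b)$-segment of the corresponding $F(c,d)-F(a,b)$ geodesic is $\alpha$. For part (2), suppose the last letter of $\alpha$ is $a$. Then for $t\in\{a,b,b^{-1}\}$ the word $w_g\,t^{k}$ is again reduced (there is no cancellation at the junction and $t^{k}$ is itself reduced), so inside $T_{ab}$ the concatenation of $\alpha$ with a geodesic $t$-path stays geodesic; combining this with the unchanged $\{c,d\}$-coordinate shows $(\alpha,\beta,\gamma)$ is geodesic and $((\alpha,\gamma'),\beta')$ is the $F(a,b)-F(c,d)$ geodesic to the endpoint of $\gamma$. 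Since $d(\ast,v\,t^{j})=d(\ast,v)+j$ increases strictly along $\gamma$, the path $\gamma$ moves geodesically away from $\ast$ and can meet $N(\ast,n)$ only if $v$ does. The final assertion of part (2) is the remark that, when $\alpha$ is nontrivial, its last letter lies in $\{a,a^{-1},b,b^{-1}\}$; appending $a$ cancels only when that letter is $a^{-1}$ and appending $b$ only when it is $b^{-1}$, and these cannot both occur, so at least one of ``every $a$-path at $v$'' and ``every $b$-path at $v$'' prolongs $w_g$ and hence moves geodesically away from $\ast$ (both do if $\alpha$ is trivial). The cases for $c$, $d$ and for the inverse letters are verbatim.

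For part (3), the relators $[a,b;c,d]$ make every $\{a,b\}$-edge commute with every $\{c,d\}$-edge, which is precisely what yields the product homotopy from $(\tau,\gamma)$ to $(\gamma',\tau')$ recalled just before the lemma; the set of vertices it sweeps is the rectangle
$$R=\{\,v\,p\,q\ :\ p\text{ a prefix of }\tau,\ q\text{ a prefix of }\gamma\,\}.$$
To control distances on $R$, observe that for every prefix $p$ of $\tau$ the vertex $v\,p$ has the \emph{same} $\{c,d\}$-coordinate reduced word $w_h$ as $v$, whose last letter is $c^{\pm1}$ (respectively $d^{\pm1}$). Hence part (2), applied at $v\,p$ with the letter $d^{-1}$ (respectively $c^{-1}$) --- this is the dichotomy recorded in the Note, forced because $\gamma$ must carry $-1$ exponents so as to interleave with the $+1$ exponents of $\tau$ into a $K$-path --- gives $d(\ast,v\,p\,q)=d(\ast,v\,p)+|q|$ for every prefix $q$ of $\gamma$. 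Since $v\,p$ is a vertex of $\tau\subset\Gamma_1-N(\ast,m)$ we have $d(\ast,v\,p)\ge m+1$, so $d(\ast,v\,p\,q)\ge m+1+|q|>m$; thus the product homotopy lies in $\Gamma(\mathcal{Q}_1)-N(\ast,m)$, and one may take $n\le m$. Finally, when $\tau$ and $\gamma$ have equal length the $K$-path $\psi$ obtained by alternating their letters is joined to $(\tau,\gamma)$ by the portion of this product homotopy that merely commutes letters past one another, so that homotopy also lies inside $R$, hence in $\Gamma(\mathcal{Q}_1)-N(\ast,n)$.

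As the statement warns, no part is genuinely difficult; the one place that rewards care is the distance estimate in part (3). One cannot assume $(\alpha,\tau)$ is geodesic --- the first letter of $\tau$ may cancel the last letter of $\alpha$ --- so the $\{a,b\}$-contribution to $d(\ast,v\,p\,q)$ must be bounded below by invoking the hypothesis $\tau\subset\Gamma_1-N(\ast,m)$ directly rather than by adding word lengths; in compensation the letter of $\gamma$ has to be chosen (the $d^{-1}$ versus $c^{-1}$ alternative governed by the last letter of $\beta$) exactly so that the $\{c,d\}$-contribution \emph{does} add, which is what makes part (2) applicable there.
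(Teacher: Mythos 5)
Your proof is correct and fills in, via the product-of-trees description of $\Gamma_1$, exactly the details the paper leaves implicit (the paper simply declares each part of this lemma clear after setting up product homotopies). The one point that genuinely needs care --- the estimate showing the whole product-homotopy rectangle avoids $N(\ast,m)$, obtained by applying part (2) at each vertex $v\,p$ of $\tau$ rather than by concatenating $\alpha$ with $\tau$ --- is handled correctly.
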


\begin{lemma} \label{F2P}
Suppose $\tau$ is an edge path in $\Gamma(\mathcal Q_1) -N(\ast, m)$ with exponent sum equal to $0$. Then $\tau$ is homotopic to a $K$-path by a homotopy in $\Gamma(\mathcal Q_1)-N(\ast, m)$. 
\end{lemma}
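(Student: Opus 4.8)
The plan is to induct on the number of pairs of consecutive edges of $\tau$ carrying the same exponent sign; call such a pair a \emph{defect} of $\tau$. First I would reduce $\tau$: deleting a backtracking pair of edges is a homotopy supported in a neighbourhood of the image of $\tau$, hence takes place in $\Gamma(\mathcal{Q}_1)-N(\ast,m)$, so we may assume $\tau=\ell_1\cdots\ell_n$ is reduced. Since $\tau$ has exponent sum $0$ it has equally many positive and negative edges, so $n$ is even; and if $\tau$ has no defects then its exponents strictly alternate, so it is already a $K$-path and there is nothing to prove.

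So suppose $\ell_j,\ell_{j+1}$ is a defect, say of positive sign (the negative case is symmetric). If $\ell_j$ and $\ell_{j+1}$ lie in different free factors they commute, and pushing $\tau$ across the corresponding $[a,b;c,d]$-square either produces a backtrack --- in which case $\tau$ shortens and we finish by induction --- or moves the defect so that its two edges lie in a common factor; by Lemma \ref{Ptrade}(2) one of the two new edges of that square moves geodesically away from $\ast$, so, using that word length in $F(a,b)\times F(c,d)$ is the sum of the word lengths in the two factors, the whole square avoids $N(\ast,m)$ and the homotopy is legal. Assume then $\ell_j,\ell_{j+1}$ lie in one factor, say $F(a,b)$. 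The repair move is: pick $z\in\{c,d\}$, insert a backtracking pair $z^{-1}z$ just after $\ell_j$ --- choosing the order of the inserted pair, and $c$ versus $d$, so that its edges move geodesically away from $\ast$, which keeps this insertion outside $N(\ast,m)$ --- and then slide the edge labelled $z$ rightward, one $[a,b;c,d]$-square at a time, carrying the defect with it. Each slide-square again avoids $N(\ast,m)$ by Lemma \ref{Ptrade}(2). While $z$ is crossing positive $F(a,b)$-edges the number of defects is unchanged (the defect simply travels with $z$), but when $z$ reaches a second defect of negative sign --- a local picture in which $z$ sits between two positive edges on its left and two negative edges on its right --- the single commutation across it destroys \emph{both} defects simultaneously; the trailing partner of the inserted pair is then parked where it creates no new defect. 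This elementary step is exactly what Lemma \ref{Ptrade}(3) and its symmetric variants package: an $\{a,b\}$-path interleaved with an equal-length $\{c^{-1}\}$- or $\{d^{-1}\}$-path --- or one of the three analogous combinations --- is homotopic through $\Gamma(\mathcal{Q}_1)-N(\ast,m)$ to a $K$-path, and that $K$-path begins with a positive edge, so the pieces produced by successive repairs concatenate into a single $K$-path whose exponents still strictly alternate across the seams.

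To see that the repairs eventually eliminate all defects, I would count: if $\tau$ has $p$ positive edges, then the number of adjacent positive pairs equals $p$ minus the number of maximal positive runs of $\tau$, and similarly for adjacent negative pairs; since positive and negative runs alternate along $\tau$, these two counts differ by at most $1$. Hence the defects can be matched in pairs --- one positive adjacency with one negative adjacency --- with at most one leftover, which is removed by one further backtrack insertion, and each matched pair is killed by a single slide as above. Iterating, $\tau$ is carried to a $K$-path by a homotopy contained in $\Gamma(\mathcal{Q}_1)-N(\ast,m)$.

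I expect the main obstacle to be the combination of two points. The first is termination: one must choose the order in which defects are matched and slid so that the trailing partners of the inserted backtracking pairs do not regenerate defects as fast as they are destroyed --- this is where the run count is needed, and it is the reason one cannot simply induct on the length of $\tau$. The second, and more delicate, is keeping \emph{every} intermediate edge-path, not merely its vertices, outside $N(\ast,m)$; this rests throughout on invoking Lemma \ref{Ptrade}(2) so that each commutation square sits on the far side of its nearest corner, together with a verification, as the slides proceed, of the last-letter hypotheses on the $F(c,d)$-coordinate of the relevant base vertex that are required to apply Lemma \ref{Ptrade}(3).
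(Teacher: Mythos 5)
Your overall strategy differs from the paper's (which inducts on the number of maximal one-factor, one-sign syllables, pairing each leading syllable with an auxiliary path in the \emph{other} factor via Lemma \ref{Ptrade}(3) and then recursing on a path with fewer syllables), and the point where your version breaks is precisely the case the paper isolates as its Case (4). The inserted letter $z\in\{c,d\}$ is not free: to keep the insertion outside $N(\ast,m)$ you must take the $z^{-1}$ (equivalently $z$) that moves geodesically away from $\ast$, and by Lemma \ref{Ptrade}(1)--(2) that choice is dictated by the last letter of the $F(c,d)$-coordinate of the base vertex. Concretely, let $v=(\alpha,cc)$ with $|\alpha|=m-1$, so $d(v,\ast)=m+1$, and let $\tau$ be the path at $v$ labelled $a\,b\,c^{-1}\,d^{-1}$; one checks $\tau\subset\Gamma(\mathcal Q_1)-N(\ast,m)$, but a $c^{-1}$-edge at $v$ lands at distance $m$, so you are forced to insert $d^{-1}d$. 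Sliding $d$ across $b$ gives $a\,d^{-1}\,b\,d\,c^{-1}\,d^{-1}$: the travelling edge $d$ neither commutes with nor cancels against the $c^{-1}$ that begins the negative defect, the sign sequence $+,-,+,+,-,-$ still has two defects, and commuting the new $(b,d)$ adjacency back just undoes the move. Your run-counting argument matches positive with negative adjacencies but ignores which free factor each lives in, so it cannot certify that a matched pair is ever reachable by commutation squares; the defect count is not a valid induction parameter as the moves stand. (The paper escapes exactly here by introducing a \emph{second} auxiliary path, in the $F(a,b)$-factor, to absorb the uncancelled positive $F(c,d)$-segment $\hat\tau_2$, which is what drops the syllable count.)

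There is a second, independent problem with the step where you commute two edges of $\tau$ itself that lie in different factors. The commutation square has one corner not on $\tau$, reached from $u$ by the \emph{given} edge $\ell_{j+1}$, and Lemma \ref{Ptrade}(2) offers no choice there: if $d(u,\ast)=m+1$ and $\ell_{j+1}$ happens to move toward $\ast$ (a positive edge can do this), that corner lies in $N(\ast,m)$ even though all of $\tau$ does not. This is why the paper only ever forms product homotopies between a segment of $\tau$ and an auxiliary path \emph{chosen} via Lemma \ref{Ptrade}(2) to move geodesically away, never between two segments of $\tau$. Finally, your closing claim that the $K$-path pieces ``still strictly alternate across the seams'' needs the bookkeeping that each piece begins positively \emph{and} ends negatively (as the paper's interleavings $\psi_i$ do); beginning positively alone does not suffice.
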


\begin{figure}
\vbox to 3in{\vspace {-2in} \hspace {.5in}
\hspace{-1.5 in}
\includegraphics[scale=1]{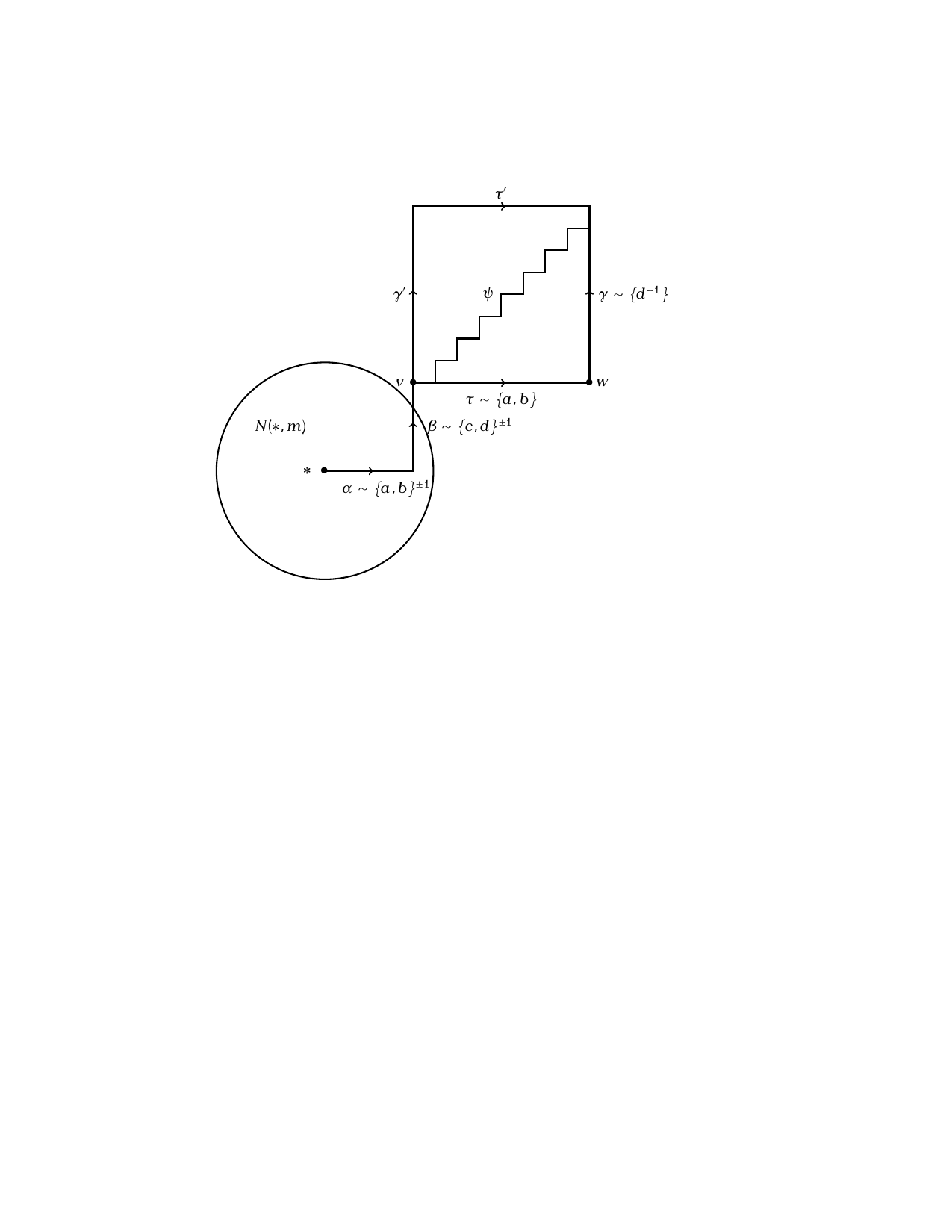}
\vss }
\vspace{.4in}
\caption{Product Homotopies in $\Gamma(\mathcal Q_1) -N(\ast, m)$} 
\label{FigP}
\vspace{.1in}
\end{figure}

\begin{proof} Write $\tau$ as $(\tau_1,\ldots, \tau_p)$ where each $\tau_i$ is a maximal $\{a,b\}$, $\{a^{-1},b^{-1}\}$, $\{c,d\}$ or $\{c^{-1},d^{-1}\}$-path connecting vertices $v_i$ and $v_{i+1}$. We call each $\tau_i$ a {\it syllable}. Let $k_i$ be the length of $\tau_i$. 
We prove this theorem by induction on $p$. Note that if $p=1$ then this path must be the trivial path and the result is true vacuously.  
Assume that  the lemma is true for some $p-1\geq 0$ (and all non-negative integers $\leq p-1$) and consider $\tau=(\tau_1,\ldots, \tau_{p})$ in $\Gamma(\mathcal Q_1)-N(\ast, m)$ of syllable length $p\geq 2$ and exponent sum equal to $0$. 
By symmetry, we may assume that $\tau_1$ is an $\{a,b\}$-path. Lemma \ref{Ptrade} (3) implies that there is a $k_1$-length $c^{-1}$ or $d^{-1}$-path $\gamma_1$ at $v_2$ and a $K$-path $\psi_1$ at $v_1$ (obtained by alternating the letters of of $\tau_1$ and $\gamma_1$) such that $\psi_1$ is homotopic to $(\tau_1,\gamma_1)$ by a homotopy in $\Gamma(\mathcal Q_1)-N(\ast,m)$ (see Figure \ref{FigL1}). Let $w_1$ be the end point of $\gamma_1$. We now must replace the path $(\gamma_1^{-1}, \tau_2,\ldots, \tau_p)$ by a $K$-path.  Notice that the labeling of this path has $0$ exponent sum.

 \medskip
 
\noindent (1) If $\tau_2$ is a $\{c,d\}$-path then $(\gamma_1^{-1},\tau_2)$ is a single syllable of $(\gamma_1^{-1}, \tau_2,\ldots, \tau_p)$. Our induction hypothesis implies $(\gamma_1^{-1} \tau_2,\ldots, \tau_p)$ is homotopic to a $K$-path $\psi_2$ by a homotopy in $\Gamma(\mathcal Q_1)-N(\ast, m)$. Combining this homotopy with the homotopy of $\psi_1$ to $(\tau_1, \gamma_1)$ (Figure \ref{FigL1}) gives a homotopy of $\tau$ to $(\psi_1,\psi_2)$ with image in $\Gamma(\mathcal Q_1)-N(\ast,m)$. 

\medskip

\noindent (2) If $\tau_2$ is an $\{a^{-1}, b^{-1}\}$ path of length $k_2\geq k_1$ then let $\tau_2=(\bar\tau_2,\hat \tau_2)$ where $\bar\tau_2$ has length $k_1$ (see Figure \ref{FigL2}). Lemma \ref{Ptrade} implies that the path with the same label as $\gamma_1$ at any vertex of $\tau_2$ moves geodesically away from $\ast$. Since $\gamma_1^{-1}$ is a $c$ or $d$-path, $(\gamma_1^{-1}, \bar \tau_2)$ is homotopic to a $K$-path $\psi_2$ at $w_1$ (obtained by alternating letters of $\gamma_1^{-1}$ and $\bar\tau_2$) by a homotopy in $\Gamma(\mathcal Q_1)-N(\ast,m)$. 
Our induction hypothesis implies that the $0$-exponent sum path $(\hat\tau_2, \tau_3,\ldots , \tau_p)$ (of syllable length $p-1$) is homotopic to a $K$-path $\psi_3$ by a homotopy in $\Gamma(\mathcal Q_1)-N(\ast,m)$. Combining homotopies, $(\tau_1,\ldots, \tau_p)$ is homotopic to the $K$-path $(\psi_1,\psi_2,\psi_3)$ by a homotopy in $\Gamma(\mathcal Q_1)-N(\ast, m)$ (see Figure \ref{FigL2}). 

\medskip

\begin{figure}
\vbox to 3in{\vspace {-2in} \hspace {.5in}
\hspace{-1.2 in}
\includegraphics[scale=1]{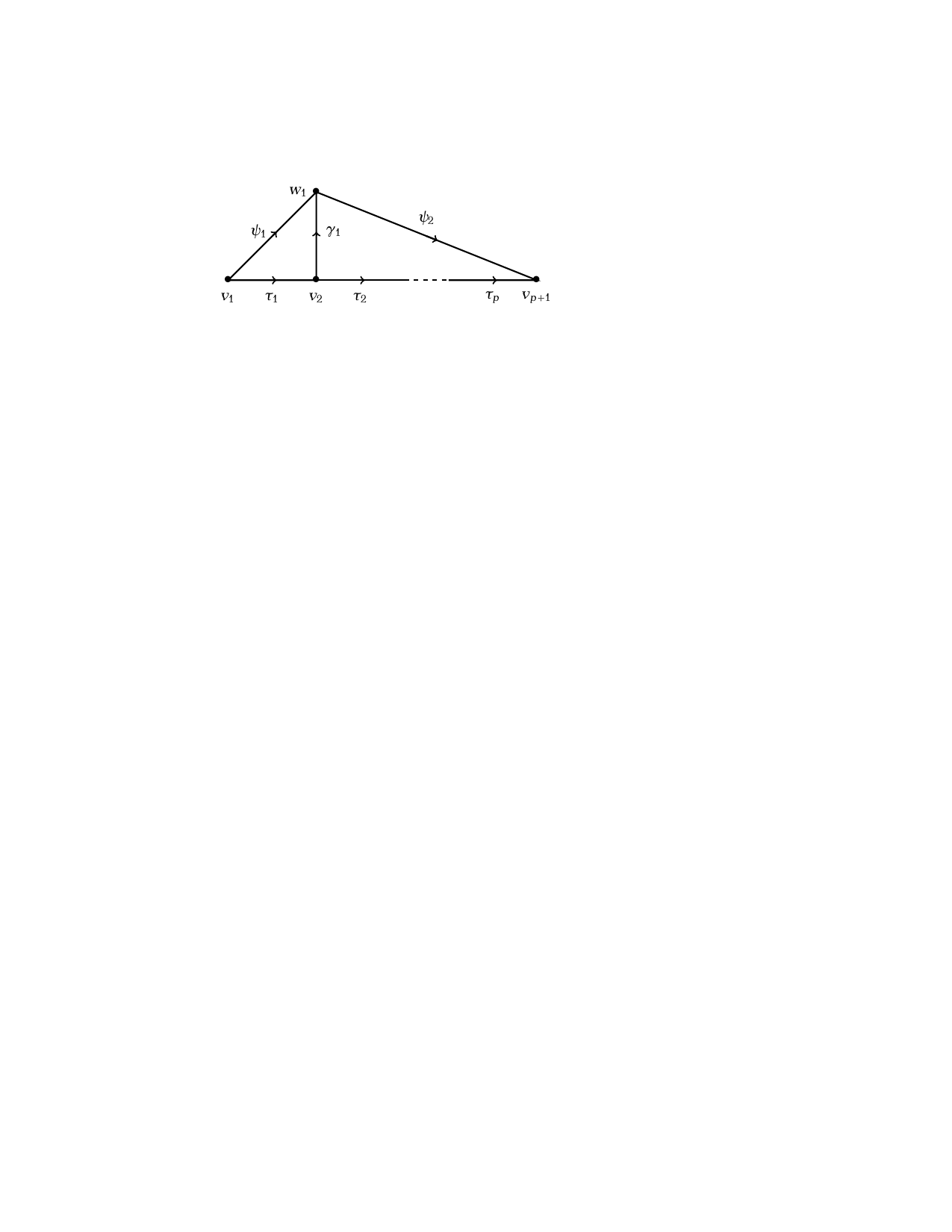}
\vss }
\vspace{-2.3in}
\caption{Part (1)} 
\label{FigL1}
\vspace{-.1in}
\end{figure}

\medskip

\begin{figure}
\vbox to 3in{\vspace {-1.5in} \hspace {.5in}
\hspace{-1.5 in}
\includegraphics[scale=1]{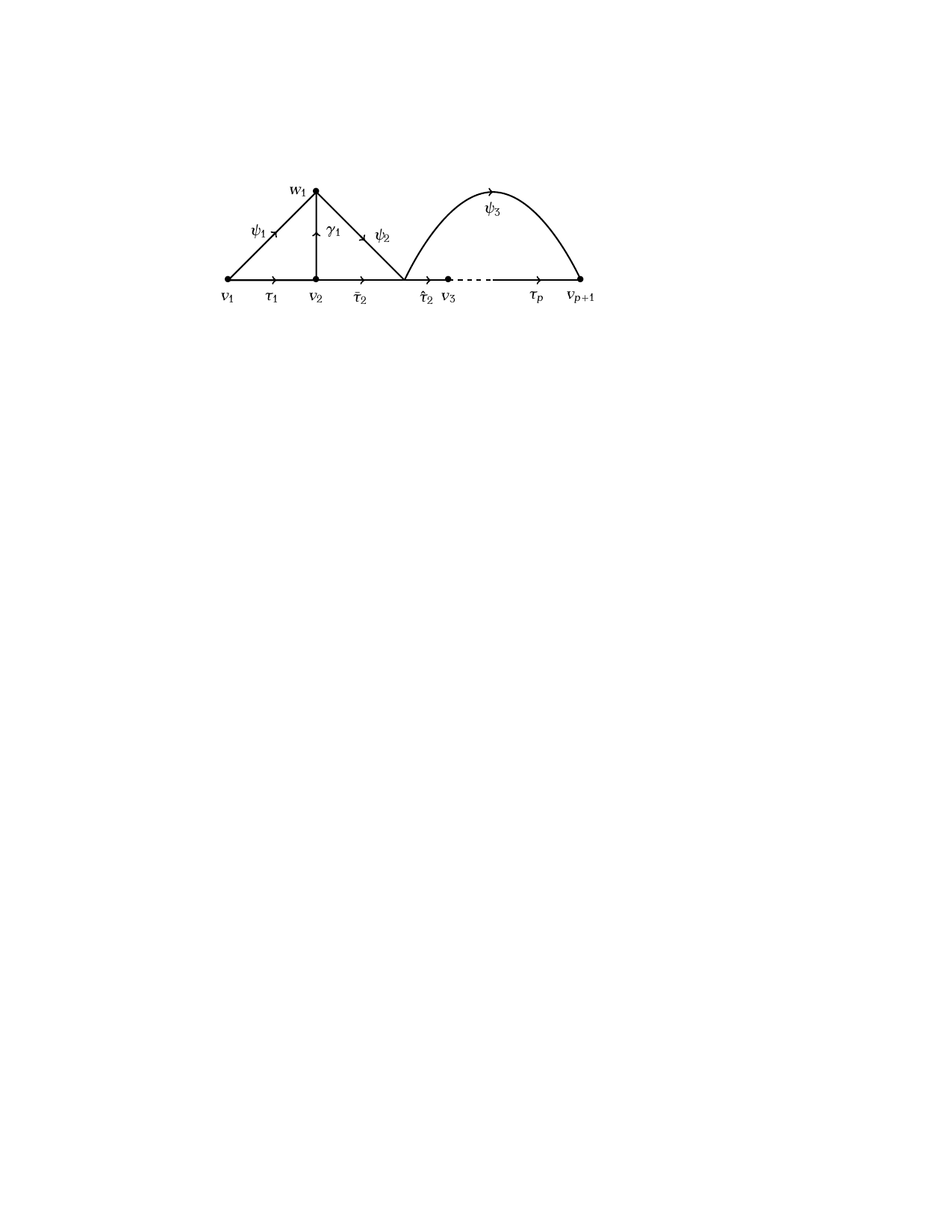}
\vss }
\vspace{-1.8in}
\caption{Part (2)} 
\label{FigL2}
\vspace{-.1in}
\end{figure}

\noindent (3) If $\tau_2$ is an $\{a^{-1}, b^{-1}\}$ path of length $k_2< k_1$ then Lemma \ref{Ptrade} (2) implies that $\tau_2$  can be extended by $\bar \tau_2$, a $(k_1-k_2)$-length $a^{-1}$ or $b^{-1}$-path in $\Gamma(\mathcal Q_1)-N(\ast,m)$. Lemma \ref{Ptrade} (3) implies that $(\gamma_1^{-1},\tau_2,  \bar\tau_2)$ is homotopic to a $K$-path $\psi_2$ by a homotopy in $\Gamma(\mathcal Q_1)-N(\ast,m)$ (see Figure \ref{FigL3}). Our induction hypothesis implies that the $0$-exponent path $(\bar\tau_2^{-1}, \tau_3,\ldots , \tau_p)$ is homotopic to a $K$-path $\psi_3$ by a homotopy in $\Gamma(\mathcal Q_1)-N(\ast,m)$. Combine homotopies, $(\tau_1,\ldots, \tau_p)$ is homotopic to the $K$-path $(\psi_1,\psi_2,\psi_3)$ by a homotopy in $\Gamma(\mathcal Q_1)-N(\ast, m)$.

\medskip

\noindent (4)  If $\tau_2$ is a $\{c^{-1}, d^{-1}\}$ path then let $(\hat\tau_2,\bar \tau_2)$ be the path obtained by eliminating backtracking in $(\gamma_1^{-1}, \tau_2)$ (then $(\gamma_1^{-1},\tau_2)$ is homotopic to $(\hat\tau_2, \bar\tau_2)$ in the image of $(\gamma_1^{-1},\tau_2)$). Let  $w_2$ be the end point of $\hat \tau_2$ (Figure \ref{FigL4.1}). The path $\hat\tau_2$ is a $c$ or $d$-path and $\bar\tau_2$ is a $\{c^{-1}, d^{-1}\}$ path. If either is trivial, then again we are left with a $0$-exponent word in $p-1$ syllables that (by our induction hypothesis) is homotopic to a $K$-path in $\Gamma-N(\ast,m)$ and we combine homotopies to finish.

\begin{figure}
\vbox to 3in{\vspace {-1.5in} \hspace {.5in}
\hspace{-1.5 in}
\includegraphics[scale=1]{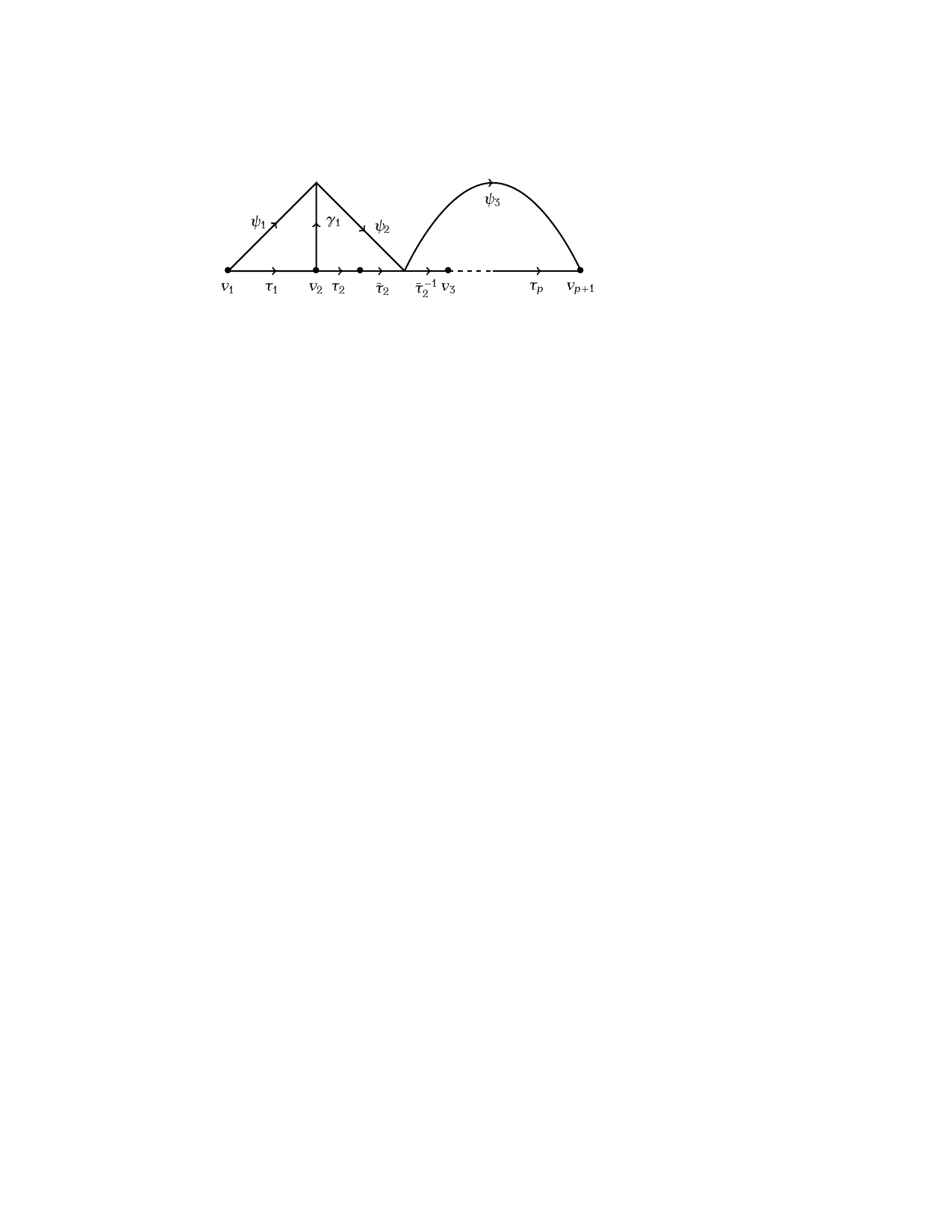}
\vss }
\vspace{-1.8in}
\caption{Part (3)} 
\label{FigL3}
\vspace{-.1in}
\end{figure}

\begin{figure}
\vbox to 3in{\vspace {-1.5in} \hspace {.5in}
\hspace{-1.5 in}
\includegraphics[scale=1]{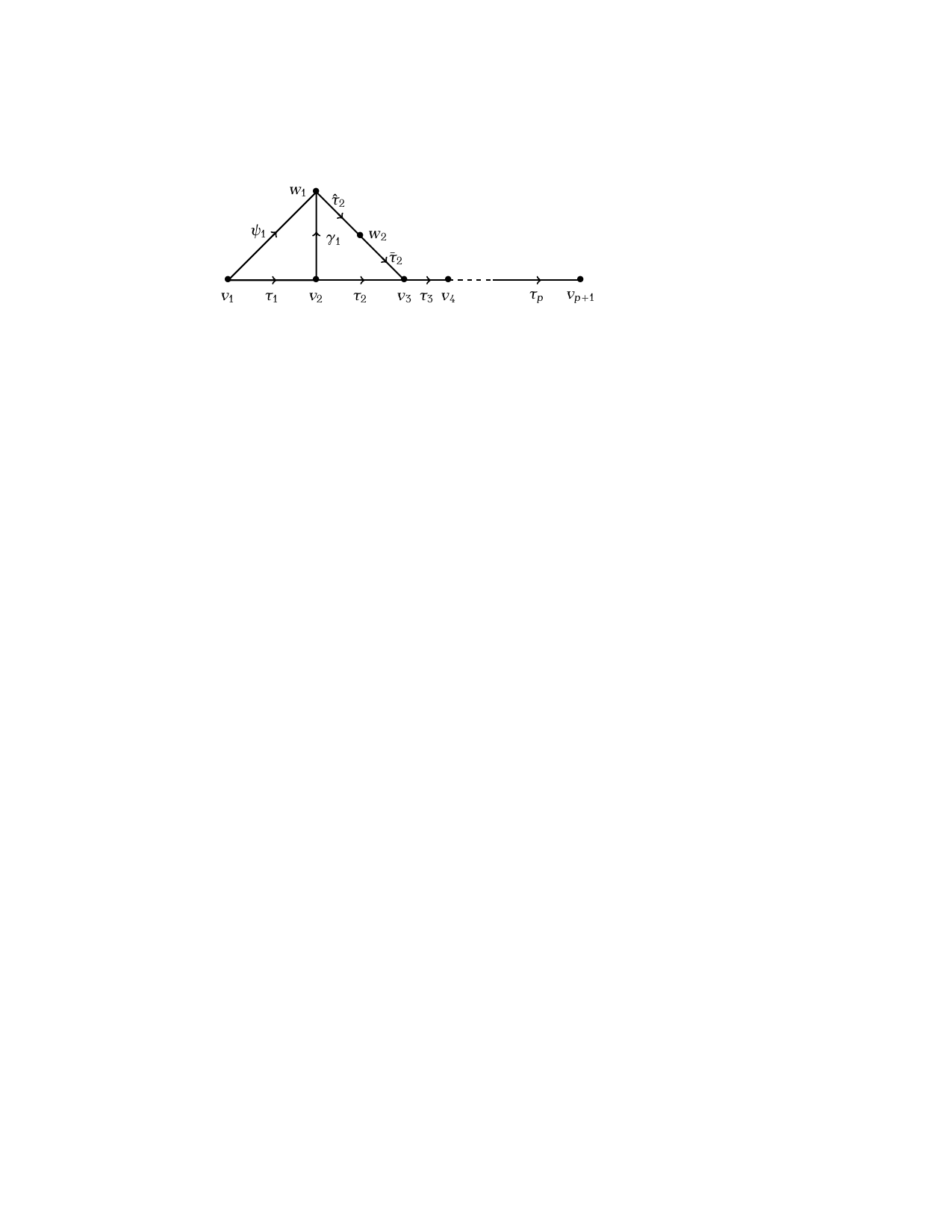}
\vss }
\vspace{-1.8in}
\caption{Part 4.1} 
\label{FigL4.1}
\vspace{-.1in}
\end{figure}

Otherwise, let $k=|\hat \tau_2|$. Lemma \ref{Ptrade} (3) implies that there is a $k$-length  $a^{-1}$ or $b^{-1}$ path $\bar \gamma_2$ at $w_2$ (that moves geodesically away from $\ast$) such that $(\hat \tau_2, \bar \gamma_2)$ is homotopic to a $K$-path $\psi_2$, by a homotopy in $\Gamma(\mathcal Q_1)-N(\ast, m)$. (See Figure \ref{FigL4.2}.)
\begin{figure}
\vbox to 3in{\vspace {-1.5in} \hspace {.5in}
\hspace{-1.5 in}
\includegraphics[scale=1]{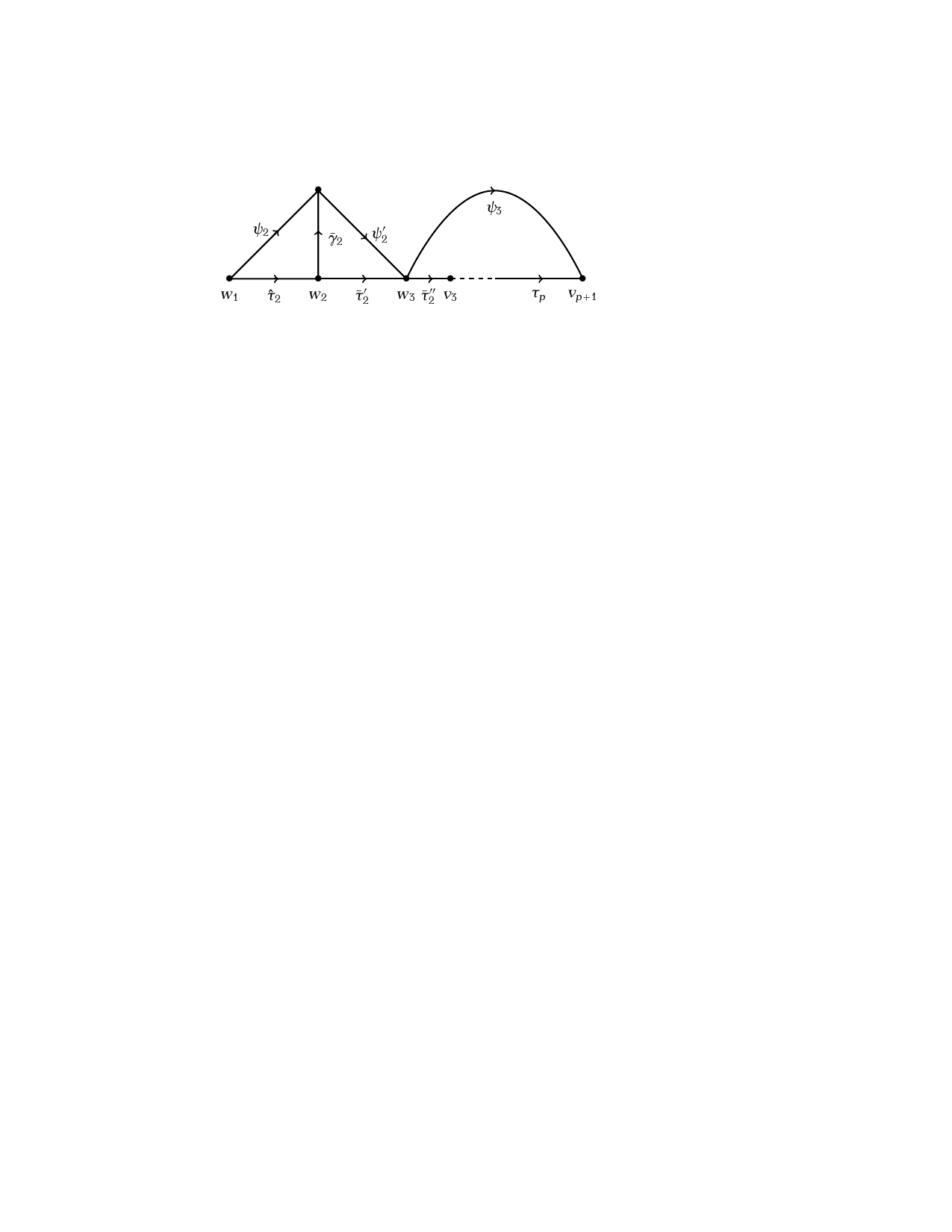}
\vss }
\vspace{-1.8in}
\caption{Part 4.2} 
\label{FigL4.2}
\vspace{-.1in}
\end{figure}
Let $\bar k=|\bar \tau_2|$. Recall, $|\bar \gamma_2|=|\hat\tau_2 |=k$. 

If $k\leq \bar k$ ($\bar\gamma_2$ is of shorter or the same length as $\bar \tau_2$) then write $\bar\tau_2=(\bar\tau_2',\bar \tau_2'')$ where $|\bar\tau_2'|=k=|\bar\gamma_2|$. Since $\bar\gamma_2$ moves geodesically away from $\ast$, Lemma \ref{Ptrade} (3) implies that the product homotopy for $\bar \gamma_2$ and $\bar \tau_2'$ has image in $\Gamma(\mathcal Q_1)- N(\ast, m)$. In particular $(\bar \gamma_2^{-1},\bar\tau_2')$ is homotopic to a $K$-path $\psi_2'$ (obtained by alternating letters of $\bar \gamma_2^{-1}$ and $\bar \tau_2'$) by a homotopy in $\Gamma(\mathcal Q_1)-N(\ast,m)$. 
Our induction hypothesis implies that the $0$-exponent sum path $(\bar\tau_2'' , \tau_3,\ldots, \tau_p)$ in $\Gamma(\mathcal Q_1)-N(\ast,m)$  (of syllable length $p-1$) is homotopic to a $K$-path $\psi_3$ by a homotopy in $\Gamma(\mathcal Q_1)-N(\ast,m)$. We combine the homotopies of Figures \ref{FigL4.1} and \ref{FigL4.2} along the path $(\hat \tau_2, (\bar \tau_2',\bar \tau_2'')=\bar \tau_2, \tau_3,\ldots, \tau_p)$ to finish. 

\begin{figure}
\vbox to 3in{\vspace {-1.5in} \hspace {.5in}
\hspace{-1.5 in}
\includegraphics[scale=1]{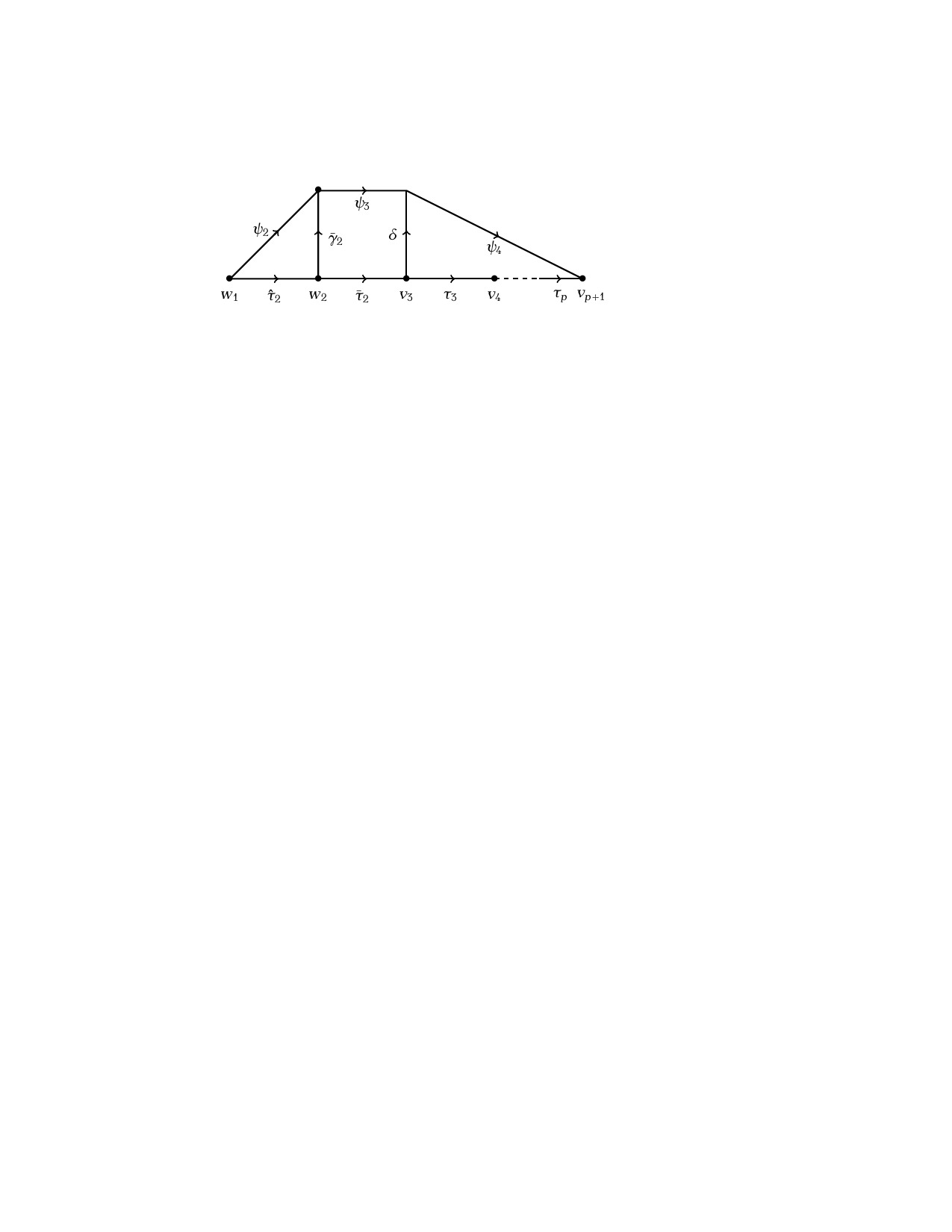}
\vss }
\vspace{-1.6in}
\caption{Part 4.3} 
\label{FigL4.3}
\vspace{-.1in}
\end{figure}

Finally, suppose $k>\bar k$ ($|\bar \gamma_2 |>|\bar \tau_2|$). Lemma \ref{Ptrade} (2) implies that either every $c^{-1}$ path at $v_3$ or every $d^{-1}$ path at $v_3$ moves geodesically away from $\ast$. Let $\delta$ be such a path of length $k-\bar k$. (See Figure \ref{FigL4.3}.) Since $\bar \gamma_2$ moves geodesically away from $\ast$, Lemma \ref{Ptrade}(3) implies the product homotopy for $\bar\gamma_2$ and $(\bar \tau_2, \delta)$ has image in $\Gamma(\mathcal Q_1)-N(\ast, m)$ and that the $K$-path $\psi_3$ (obtained by alternating letters of $\bar \gamma_2^{-1}$ and $(\bar\tau_2,\delta)$ at the end point of $\bar\gamma_2$) is homotopic to $(\bar \gamma_2^{-1},\bar \tau_2, \delta)$ in $\Gamma(\mathcal Q_1)-N(\ast, m)$. 
Our induction hypothesis implies that the $0$-exponent sum path $(\delta^{-1}, \tau_3,\ldots, \tau_p)$ in $\Gamma(\mathcal Q_1)-N(\ast,m)$  (of syllable length $p-1$) is homotopic to a $K$-path $\psi_4$ by a homotopy in $\Gamma(\mathcal Q_1)-N(\ast,m)$. Combining the homotopies of Figures \ref{FigL4.1} and \ref{FigL4.3} along $(\hat \tau_2, \bar \tau_2, \tau_3,\ldots ,\tau_p)$ finishes our proof. 
\end{proof}

\section {Stallings' Group Is Simply Connected At Infinity} \label{SCI}

\begin{proof} (of Theorem \ref{Main})
By Lemma \ref{Reduce} it is enough to prove that $\langle F(a,b)\times F(c,d)$ is simply connected at infinity in Stallings' group $S$. Let $C$ be a finite subcomplex of $X$. Let $H=\langle K\cup \{s\}\rangle$ and  $\Gamma(H)\subset X$ be the Cayley graph of $H$ with respect to $\{s,e_1,\ldots, e_q\}$. Recall that $\Gamma(K)\subset \Gamma (H)$ is the Cayley graph of $K$ with respect to $\{e_1,\ldots, e_q\}$. Let $\bar \Gamma(H)$ be $\Gamma(H)$ union all conjugation 2-cells for the relations $[s,e_i]$ for all $i$. Observe that $\bar \Gamma(H)$ is a subcomplex of $X$ that is homeomorphic to $\Gamma(K)\times \mathbb R$ and the action of the cyclic group $\langle s\rangle$ on $\bar \Gamma(H)$ respects this product structure.  
For $g\in S$, only finitely many $g\bar\Gamma(H)$ intersect $C$. List them as  $g_1\bar\Gamma(H), \ldots, g_m\bar\Gamma(H)$. Since $K$ is one ended (Lemma \ref{OneE}), Theorem \ref{sc} implies that $H$ is simply connected at infinity. There are finite complexes $D_1,\ldots, D_m$ in $X$ such that $\{s, e_1,\ldots, e_q\}$-loops in $g_i\bar\Gamma(H)-D_i$ are homotopically trivial in $X-C$ for all $i$. Let $D=C\cup (\cup_{i=1}^m D_i)$. Choose $k$ such that $D\cap \Gamma_1\subset N(\ast,k)(\subset \Gamma_1)$. We want to show that edge path loops in $\Gamma_1-N(\ast, k)$ are homotopically trivial in $X-C$. Let $\tau$ be an edge path loop in 
$$\Gamma_1-N(\ast,k)\subset \Gamma_1-D= \Gamma_1-(C\cup(\cup_{i-1}^m D_i))=(\Gamma_1-C)\cap(\cap_{i=1}^m\Gamma_1-D_i).$$ 
Lemma \ref{F2P} implies that $\tau$ is homotopic to an $K$-loop $\gamma$  by a homotopy in $\Gamma(\mathcal Q_1)-N(\ast, k)\subset \Gamma(\mathcal Q_1)-C$. Observe that $\gamma$ is homotopic to an $\{e_1,\ldots, e_q\}^{\pm 1}$ path $\gamma'$ by a homotopy that only uses ``triangle" 2-cells coming from the relations $e_i=w_i$. The vertices of $\gamma'$ are a subset of those of $\gamma$.  
Choose $g\in S$ such that $\gamma'$ has image in $g\Gamma(H)$. If $g\bar\Gamma(H)=g_i\bar\Gamma(H)$ for some $i\in \{1,\ldots, m\}$ then since $\gamma$ has image in $\Gamma_1-D_i$, $\gamma'$ has image in $g_i\bar\Gamma(H)-D_i$ and is homotopically trivial in $X-C$ (finishing our proof).

If $g\bar\Gamma(H)\ne g_i\bar\Gamma(H)$ for all $i$, then $g\bar\Gamma(H) \cap C=\emptyset$. Let $\bar N(\ast, j)$ be the $j$-neighborhood of $\ast$ in $X$. Choose an integer $j$ such that $\gamma'$ is homotopically trivial in $\bar N(v,j)$ for any vertex $v$ of $\gamma'$. Using the product structure of $\bar\Gamma(H)$, there is an integer $p>0$ such that $gs^pg^{-1} \gamma'$ has image in $g\bar \Gamma(H)-N^j(C)$ and so is homotopically trivial in $X-C$. The product structure of $\bar \Gamma(H)$ implies that $\gamma'$ is homotopic to $gs^pg^{-1}\gamma'$ in $g\bar \Gamma(H)$ (so this homotopy avoids $C$). Combining homotopies, $\gamma'$ is homotopically trivial by a homotopy in $X-C$.
\end{proof}

\bibliographystyle{amsalpha}
\bibliography{paper1}{}

\end{document}